\documentclass[12pt]{amsart}
\usepackage{amssymb}
\usepackage{hyperref}

\hyphenation{Ha-da-mard}

\textwidth=160mm
\textheight=200mm
\topmargin=20mm
\hoffset=-20mm

\newtheorem{theorem}{Theorem}[section]
\newtheorem{definition}[theorem]{Definition}
\newtheorem{proposition}[theorem]{Proposition}

\newtheorem{lemma}[theorem]{Lemma}
\newtheorem{corollary}[theorem]{Corollary}

\setcounter{MaxMatrixCols}{20}

\begin{document}

\title{Submatrices of Hadamard matrices: complementation results}

\author{Teo Banica}
\address{T.B.: Department of Mathematics, Cergy-Pontoise University, 95000 Cergy-Pontoise, France. {\tt teo.banica@gmail.com}}

\author{Ion Nechita}
\address{I.N.: CNRS, Laboratoire de Physique Th\'eorique, IRSAMC, Universit\'e de Toulouse, UPS, 31062 Toulouse, France. {\tt nechita@irsamc.ups-tlse.fr}}

\author{Jean-Marc Schlenker}
\address{J.-M.S.: University of Luxembourg, Campus Kirchberg
Mathematics Research Unit, BLG
6, rue Richard Coudenhove-Kalergi
L-1359 Luxembourg. \tt{jean-marc.schlenker@uni.lu}}

\subjclass[2000]{15B34}
\keywords{Hadamard matrix, Almost Hadamard matrix}

\begin{abstract}
Two submatrices $A,D$ of a Hadamard matrix $H$ are called {\em complementary} if, up to a permutation of rows and columns, $H=[^A_C{\ }^B_D]$. We find here an explicit formula for the polar decomposition of $D$. As an application, we show that under suitable smallness assumptions on the size of $A$, the complementary matrix $D$ is an {\em almost Hadamard sign pattern}, i.e. its rescaled polar part is an almost Hadamard matrix.
\end{abstract}

\maketitle

\tableofcontents

\section*{Introduction}

A Hadamard matrix is a square matrix $H\in M_N(\pm1)$, whose rows are pairwise orthogonal. The basic example is the Walsh matrix, having size $N=2^n$:
$$W_N=\begin{bmatrix}+&+\\+&-\end{bmatrix}^{\otimes n}$$

In general, the Hadamard matrices can be regarded as ``generalizations'' of the Walsh matrices. Their applications, mostly to coding theory and to various engineering questions, parallel the applications of the Walsh functions and matrices.

Mathematically speaking, $W_N=W_2^{\otimes n}$ is the matrix of the Fourier transform over the group $G=\mathbb Z_2^n$, and so the whole field can be regarded as a ``non-standard'' branch of discrete Fourier analysis. Of particular interest here is the Hadamard conjecture: for any $N\in 4\mathbb N$, there exists a Hadamard matrix $H\in M_N(\pm1)$. See \cite{sya}, \cite{sha}.

We are interested here in square submatrices of such matrices. Up to a permutation of rows and columns we can assume that our submatrix appears at top left or bottom right:

\medskip

\noindent {\bf Setup.} {\em We consider Hadamard matrices $H\in M_N(\pm1)$ written as
$$H=\begin{bmatrix}A&B\\C&D\end{bmatrix}$$
with $A\in M_r(\pm1)$ and $D\in M_d(\pm1)$, where $N=r+d$.}

\medskip

As a first observation, one can show that any $\pm1$ matrix appears as submatrix of a certain large Walsh matrix, so nothing special can be said about $A,D$. That is, when regarded individually, $A,D$ are just some ``random'' $\pm1$ matrices.

The meaning of the word ``random'' here is in fact quite tricky. In general, the random Bernoulli matrices $D\in M_d(\pm1)$ are known to obey to the Tao-Vu rules \cite{tv1}, \cite{tv2}, and their refinements, and basically to nothing more, in the $d\to\infty$ limit.

For submatrices of Hadamard matrices, however, the situation is much more complicated, and what ``random'' should really mean is not clear at all. All this is of course related to the Hadamard Conjecture. See de Launey and Levin \cite{ll1}, \cite{ll2}.

Now back to our matrices $A,D$, the point is to consider them ``together''. As a first remark here, the unitarity of $U=\frac{H}{\sqrt{N}}$ gives, as noted by Sz\"{o}ll\H{o}si in \cite{szo}:

\medskip

\noindent {\bf Fact.} {\em If $A\in M_r(\pm1),D\in M_d(\pm1)$ are as above then the singular values of $\frac{A}{\sqrt{N}},\frac{D}{\sqrt{N}}$ are identical, up to $|r-d|$ values of $1$. In particular, $|\det\frac{A}{\sqrt{N}}|=|\det\frac{D}{\sqrt{N}}|$.}

\medskip

This simple fact brings a whole new perspective on the problem: we should call $A,D$ ``complementary'', and see if there are further formulae relating them.

Let us recall now a few findings from \cite{bcs}, \cite{bne}, \cite{bns}, \cite{bnz}. As noted in \cite{bcs}, by Cauchy-Schwarz an orthogonal matrix $U\in O(N)$ satisfies $||U||_1\leq N\sqrt{N}$, with equality if and only if $H=\sqrt{N}U$ is Hadamard. This is quite nice, and leads to:

\medskip

\noindent {\bf Definition.} {\em A square matrix $H\in M_N(\mathbb R)$ is called almost Hadamard (AHM) if the following equivalent conditions are satisfied:
\begin{enumerate}
\item $U=H/\sqrt{N}$ is orthogonal, and locally maximizes the $1$-norm on $O(N)$. 

\item $U_{ij}\neq 0$ for any $i,j$, and $US^t>0$, where $S_{ij}=sgn(U_{ij})$.
\end{enumerate}
In this case, we say that $S\in M_N(\pm1)$ is an almost Hadamard sign pattern (AHP).}

\medskip

In this definition the equivalence $(1)\iff(2)$ follows from a differential geometry computation, performed in \cite{bcs}. For results on these matrices, see \cite{bcs}, \cite{bne}, \cite{bns}, \cite{bnz}.

For the purposes of this paper, observe that we have a bijection as follows, implemented by $S_{ij}=sgn(H_{ij})$ in one direction, and by $H=\sqrt{N}Pol(S)$ in the other:
$$AHM\longleftrightarrow AHP$$

With these notions in hand, let us go back to the matrices $A\in M_r(\pm1),D\in M_d(\pm1)$ above. It was already pointed out in \cite{bnz}, or rather visible from the design computations performed there, that when $r=1$, the matrix $D$ must be AHP. We will show here that this kind of phenomenon holds under much more general assumptions. First, we have:

\medskip

\noindent {\bf Lemma.} {\em Let $H=\begin{bmatrix}A&B\\C&D\end{bmatrix}\in M_N(\pm1)$ be a Hadamard matrix, such that $A\in M_r(\pm 1)$ is invertible. Then, the polar decomposition $D=UT$ is given by
$$U=\frac{1}{\sqrt{N}}(D-E)\qquad T=\sqrt{N}I_d-S$$ 
where $E=C(\sqrt{N}I_r+\sqrt{A^tA})^{-1}Pol(A)^tB$ and $S=B^t(\sqrt{N}I_r+\sqrt{AA^t})^{-1}B$.}

\medskip

These formulae, which partly extend the work in \cite{km1}, \cite{km2}, \cite{szo}, will allow us to estimate the quantity $||E||_\infty$, and then to prove the following result:

\medskip

\noindent {\bf Theorem.} {\em Given a Hadamard matrix $H=\begin{bmatrix}A&B\\C&D\end{bmatrix}\in M_N(\pm1)$ with $A\in M_r(\pm1)$, $D$ is an almost Hadamard sign pattern (AHP) if:
\begin{enumerate}
\item $A$ is invertible, and $r=1,2,3$.

\item $A$ is Hadamard, and $N>r(r-1)^2$.

\item $A$ is invertible, and $N>\frac{r^2}{4}(r+\sqrt{r^2+8})^2$
\end{enumerate}}

\medskip

The paper is organized as follows: \ref{sec:AHM}-\ref{sec:submatrices} are preliminary sections, in \ref{sec:polar} we state and prove our main results, and in \ref{sec:small-r}-\ref{sec:non-AHP} we discuss examples, and present some further results.

\bigskip
 
\noindent\textbf{Acknowledgements.} The work of I.N. was supported by the ANR grants ``OSQPI'' {2011 BS01
  008 01} and ``RMTQIT''  {ANR-12-IS01-0001-01}, and by the PEPS-ICQ CNRS project ``Cogit''.

\section{Almost Hadamard matrices}\label{sec:AHM}

A Hadamard matrix is a square matrix $H\in M_N(\pm1)$, whose rows are pairwise orthogonal. By looking at the first 3 rows, we see that the size of such a matrix satisfies $N\in\{1,2\}\cup 4\mathbb N$. In what follows we assume $N\geq 4$, so that $N\in4\mathbb N$.

We are interested here in the submatrices of such matrices. In this section we recall some needed preliminary material, namely: (1) the polar decomposition, and (2) the almost Hadamard matrices. Let us begin with the polar decomposition \cite{hjo}:

\begin{proposition}
Any matrix $D\in M_N(\mathbb R)$ can be written as $D=UT$, with positive semi-definite $T=\sqrt{D^tD}$, and with orthogonal $U \in O(N)$. If $D$ is invertible, then $U$ is uniquely determined and we write $U = Pol(D)$.
\end{proposition}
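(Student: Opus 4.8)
The plan is to reduce everything to the spectral theorem applied to the symmetric positive semi-definite matrix $D^tD$, and then to build $U$ from a singular value decomposition of $D$. First I would observe that $D^tD$ is symmetric, since $(D^tD)^t=D^tD$, and positive semi-definite, since $x^tD^tDx=\|Dx\|^2\geq0$ for all $x$. The spectral theorem then gives an orthogonal $O\in O(N)$ and a diagonal matrix $\Lambda=\mathrm{diag}(\lambda_1,\dots,\lambda_N)$ with $\lambda_i\geq0$ such that $D^tD=O\Lambda O^t$. This lets me define the positive semi-definite square root $T=\sqrt{D^tD}=O\,\mathrm{diag}(\sqrt{\lambda_1},\dots,\sqrt{\lambda_N})\,O^t$, which is the candidate for $T$ in the statement.

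The key identity to extract is that $\ker T=\ker D$: indeed $\|Tx\|^2=x^tT^2x=x^tD^tDx=\|Dx\|^2$, so $Tx=0$ if and only if $Dx=0$. From this I would construct $U$. On $\mathrm{range}(T)=(\ker T)^\perp=(\ker D)^\perp$ I define $U$ by setting $U(Tx)=Dx$; the displayed norm identity shows this is well-defined (independent of the representative $x$) and isometric, so it maps $\mathrm{range}(T)$ isometrically onto $\mathrm{range}(D)$. Since these subspaces have equal dimension, so do their orthogonal complements $\ker T$ and $\ker D^t$, and I would extend $U$ by any fixed isometry between these complements to obtain a genuine orthogonal matrix $U\in O(N)$ satisfying $D=UT$.

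The invertible case deserves a separate, cleaner mention, since this is what the subsequent lemma actually uses: if $D$ is invertible then $T$ is invertible, because $(\det T)^2=\det(T^2)=\det(D^tD)=(\det D)^2\neq0$, and one simply sets $U=DT^{-1}$, which is orthogonal since $U^tU=T^{-1}D^tDT^{-1}=T^{-1}T^2T^{-1}=I_N$. Uniqueness in this case follows by noting that any decomposition $D=UT$ with $U$ orthogonal forces $D^tD=TU^tUT=T^2$, so that $T$ must be the unique positive semi-definite square root of $D^tD$, after which $U=DT^{-1}$ is forced as well.

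I expect the only genuine obstacle to lie in the existence of $U$ in the non-invertible case, namely checking that the partial isometry $Tx\mapsto Dx$ extends to a full orthogonal matrix; this rests precisely on the dimension count $\dim\ker T=\dim\ker D^t$ recorded above. Everything else is routine linear algebra, and for the applications in this paper only the invertible case, with its immediate formula $U=DT^{-1}$ and built-in uniqueness, is actually invoked.
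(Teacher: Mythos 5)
Your proof is correct, but it takes a genuinely different route from the paper. The paper does not prove this proposition at all: it states it as a known preliminary, citing Horn and Johnson, and then records in the following proposition the standard derivation from the singular value decomposition --- if $D=V\Delta W^t$ with $V,W$ orthogonal and $\Delta$ diagonal, then $D=(VW^t)(W\Delta W^t)$, so one takes $U=VW^t$ and $T=W\Delta W^t$, which is a one-line argument once the SVD is granted. You instead give a self-contained construction: spectral theorem applied to $D^tD$ to produce $T=\sqrt{D^tD}$, the kernel identity $\ker T=\ker D$ via $\|Tx\|=\|Dx\|$, the partial isometry $Tx\mapsto Dx$ on $\mathrm{range}(T)$, and an isometric extension to all of $\mathbb{R}^N$ justified by the dimension count $\dim\ker T=\dim\ker D^t$. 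This is the operator-theoretic proof; it avoids invoking the SVD as a black box (indeed it only needs the spectral theorem for symmetric matrices, from which the SVD itself is usually derived), and the same construction generalizes beyond the finite-dimensional setting. What the paper's route buys is brevity and the explicit formula $Pol(D)=VW^t$ in terms of the SVD, which is exactly the form used in the proof of Lemma 3.2, where the polar decomposition of $D$ is assembled from explicit singular value decompositions of the blocks $A,B,C,D$. Your treatment of the invertible case --- $U=DT^{-1}$ with uniqueness forced by $T^2=D^tD$ and the uniqueness of the positive semi-definite square root --- is exactly the uniqueness statement the paper needs, and your closing remark is accurate: the paper explicitly restricts attention to invertible sign patterns precisely because the polar part is not unique in the singular case.
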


The polar decomposition can be deduced from the singular value decomposition (again, see \cite{hjo} for details):

\begin{proposition}
If $D=V\Delta W^t$ with $V,W$ orthogonal and $\Delta$ diagonal is the singular values decomposition of $D$, then $Pol(D)=VW^t$.
\end{proposition}

Let us discuss now the notion of almost Hadamard matrix, from \cite{bcs}, \cite{bne}, \cite{bns}, \cite{bnz}. Recall first that the coordinate $1$-norm of a matrix $M\in M_N(\mathbb C)$ is given by: 
$$||M||_1=\sum_{ij}|M_{ij}|$$

The importance of the coordinate $1$-norm in relation with Hadamard matrices was realized in \cite{bcs}, where the following observation was made:

\begin{proposition}\label{prop:CS-AHM}
For $U\in O(N)$ we have $||U||_1\leq N\sqrt{N}$, with equality if and only if the rescaled matrix $H=\sqrt{N}U$ is Hadamard.
\end{proposition}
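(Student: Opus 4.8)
The plan is to apply the Cauchy-Schwarz inequality one row at a time. Fix $U\in O(N)$ and denote its rows by $U_1,\dots,U_N\in\mathbb R^N$. The orthogonality of $U$ means in particular that each row is a unit vector, so $\sum_j U_{ij}^2=1$ for every $i$. First I would bound the contribution of a single row: applying Cauchy-Schwarz to the vectors $(|U_{ij}|)_j$ and $(1,\dots,1)$ gives
$$\sum_j|U_{ij}|\leq\sqrt{N}\Big(\sum_j U_{ij}^2\Big)^{1/2}=\sqrt{N}.$$
Summing this over the $N$ rows then yields $||U||_1=\sum_{ij}|U_{ij}|\leq N\sqrt{N}$, which is exactly the desired inequality.

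For the equality case, the key observation is that Cauchy-Schwarz is an equality in row $i$ precisely when the vector $(|U_{ij}|)_j$ is proportional to the all-ones vector, i.e. when all entries of that row have the same absolute value. Combined with the normalization $\sum_j U_{ij}^2=1$, this forces $|U_{ij}|=1/\sqrt{N}$ for every $j$. Since the global inequality $||U||_1\leq N\sqrt{N}$ was obtained by summing $N$ separate inequalities, equality holds globally if and only if it holds in each row, that is, if and only if $|U_{ij}|=1/\sqrt{N}$ for all $i,j$.

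It then remains to translate this condition into the statement about $H=\sqrt{N}\,U$. The equality condition $|U_{ij}|=1/\sqrt{N}$ is equivalent to $H_{ij}=\sqrt{N}\,U_{ij}\in\{\pm1\}$ for all $i,j$, namely $H\in M_N(\pm1)$. Finally, since $U$ is orthogonal, the rows of $H=\sqrt{N}\,U$ are automatically pairwise orthogonal, so a $\pm1$-valued $H$ is exactly a Hadamard matrix; this gives both implications of the biconditional at once. The only point requiring genuine care is the equality analysis: one must verify that row-wise equality in Cauchy-Schwarz, together with the unit-norm constraint, pins the absolute values down to the constant $1/\sqrt{N}$, rather than merely to a constant within each row. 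Everything else is a direct computation, so I expect no serious obstacle beyond keeping this equality discussion precise.
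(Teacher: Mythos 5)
Your proof is correct and follows essentially the same route as the paper, which deduces the bound from $||U||_2=\sqrt{N}$ by Cauchy--Schwarz; applying Cauchy--Schwarz row by row rather than to all $N^2$ entries at once is an immaterial reorganization, and your equality analysis (constant absolute values forced to $1/\sqrt{N}$ by the unit-norm rows, then orthogonality of the rescaled rows giving the Hadamard property) is exactly the intended argument.
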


\begin{proof}
This follows from $||U||_2=\sqrt{N}$ by Cauchy-Schwarz, see \cite{bcs}.
\end{proof}

Since the global maximum of the $1$-norm over the orthogonal group is quite difficult to find, in \cite{bns}, \cite{bnz} the {\em local maxima} of the $1$-norm were introduced:

\begin{definition}
An {\em almost Hadamard matrix} (AHM) is a square matrix $H\in M_N(\mathbb R)$ having the property that $U=H/\sqrt{N}$ is a local maximum of the $1$-norm on $O(N)$.
\end{definition}

According to Proposition \ref{prop:CS-AHM}, these matrices can be thought of as being generalizations of the Hadamard matrices. Here is a basic example, which works at any $N\geq 3$:
$$K_N=\frac{1}{\sqrt{N}}\begin{bmatrix}
2-N&2&\ldots&2\\
2&2-N&\ldots&2\\
\ldots&\ldots&\ldots&\ldots\\
2&2&\ldots&2-N
\end{bmatrix}$$

We have the following characterization of rescaled AHM, from \cite{bns}:

\begin{theorem}
A matrix $U\in O(N)$ locally maximizes the $1$-norm on $O(N)$ if and only if $U_{ij}\neq 0$, and $U^tS\geq 0$, where $S_{ij}=sgn(U_{ij})$.
\end{theorem}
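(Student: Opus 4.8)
The plan is to treat $f(U)=||U||_1$ as a function on the compact manifold $O(N)$ and to analyze its local maxima by elementary variational calculus, exploiting that away from the coordinate hyperplanes $\{U_{ij}=0\}$ the function $f$ is simply the restriction to $O(N)$ of the \emph{linear} functional $U\mapsto\langle S,U\rangle=\sum_{ij}S_{ij}U_{ij}=\mathrm{Tr}(S^tU)$, where $S$ is the locally constant sign matrix $S_{ij}=sgn(U_{ij})$. Throughout I write $M:=S^tU$, so that the condition $U^tS\geq0$ of the statement is the same as $M\geq0$ (both matrices are transposes of one another, hence simultaneously symmetric and positive semidefinite).

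The sufficiency direction is the clean part. Assume $U_{ij}\neq0$ for all $i,j$ and that $M=S^tU$ is symmetric and positive semidefinite. For any $V\in O(N)$ close enough to $U$ the signs are preserved, $sgn(V_{ij})=S_{ij}$, so $f(V)=\mathrm{Tr}(S^tV)=\mathrm{Tr}(S^tU\,U^tV)=\mathrm{Tr}(MO)$ with $O:=U^tV\in O(N)$, while $f(U)=\mathrm{Tr}(M)$. Diagonalizing $M=P\,\mathrm{diag}(\mu_1,\dots,\mu_N)\,P^t$ with $\mu_i\geq0$ and setting $O'=P^tOP\in O(N)$, one gets $\mathrm{Tr}(MO)=\sum_i\mu_iO'_{ii}\leq\sum_i\mu_i|O'_{ii}|\leq\sum_i\mu_i=\mathrm{Tr}(M)$, since the diagonal entries of an orthogonal matrix have modulus at most $1$. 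Hence $f(V)\leq f(U)$ near $U$; in fact this shows $U$ maximizes $\langle S,\cdot\rangle$ over all of $O(N)$, not just locally.

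For necessity I would first rule out vanishing entries. If $U_{i_0j_0}=0$ then, the $i_0$-th row being a unit vector, there is $j_1\neq j_0$ with $U_{i_0j_1}\neq0$; rotating columns $j_0,j_1$ by an angle $\theta$ gives a curve $V(\theta)=UG(\theta)\in O(N)$ along which the $(i_0,j_0)$ summand of $f$ equals $|U_{i_0j_1}|\,|\sin\theta|$, a genuine upward corner at $\theta=0$, whereas every other summand is either smooth there or another such corner. Consequently the right derivative of $\theta\mapsto f(V(\theta))$ at $0$ \emph{strictly} exceeds its left derivative, which is incompatible with a local maximum (where one needs right derivative $\leq0\leq$ left derivative). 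So all entries are nonzero and $f$ is smooth near $U$. Differentiating $t\mapsto f(Ue^{tX})=\mathrm{Tr}(Me^{tX})$ along the one-parameter subgroups through $U$ (with $X$ antisymmetric) and using that $X$ and $-X$ are both admissible, the first-order condition $\mathrm{Tr}(MX)=0$ for every antisymmetric $X$ forces $M=S^tU$ to be symmetric, i.e. $U^tS=S^tU$.

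The main obstacle is the final step: upgrading local maximality to positive semidefiniteness of $M$. The second-order condition reads $\mathrm{Tr}(MX^2)\leq0$ for all antisymmetric $X$; writing $X=(a_{ij})$ in an orthonormal eigenbasis of $M$ (so $a_{ij}=-a_{ji}$) a direct computation gives $\mathrm{Tr}(MX^2)=-\sum_{i<j}(\mu_i+\mu_j)a_{ij}^2$, so the Hessian by itself yields only the a priori \emph{weaker} statement that $\mu_i+\mu_j\geq0$ for every pair $i<j$. Promoting this to $\mu_i\geq0$ for all $i$ — equivalently, to the statement that $U$ equals the polar part $Pol(S)$ of its own sign matrix, consistent with the bijection $H=\sqrt N\,Pol(S)$ recalled in the introduction — is the delicate point, and it cannot follow from eigenvalue bookkeeping alone: one must use the full rigidity of the pattern $S=sgn(U)$ with $U\in O(N)$ (note for instance $M_{kk}=\sum_i|U_{ik}|\geq1$, so $\mathrm{Tr}(M)\geq N$) to rule out the degenerate near-maxima that the pairwise condition would otherwise permit. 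Concretely, I would analyze the flat directions of the Hessian and the higher-order behaviour of $f$ along them, showing that the sign-pattern constraints prevent $f$ from remaining maximal unless every $\mu_i$ is nonnegative, which is exactly $U^tS\geq0$.
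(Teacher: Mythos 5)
Your sufficiency argument, your rotation trick ruling out zero entries, and your first- and second-order computations are all correct, and they follow the same differential-geometric route that the paper sketches (the paper itself defers the details to \cite{bns}, saying only ``rotation trick + Hessian''). The problem is the final step, which you have honestly flagged but not solved: local maximality gives you only that $M=S^tU$ is symmetric with $\mu_i+\mu_j\geq 0$ for all $i<j$, and the promotion of this pairwise condition to $M\geq 0$ is exactly the necessity direction of the theorem. As written, your text ends with a plan rather than a proof, so there is a genuine gap.

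Worse, the repair you propose --- studying ``the flat directions of the Hessian and the higher-order behaviour of $f$ along them'' --- cannot close this gap, because the obstruction is invisible to local analysis of any order. If $M$ is symmetric with eigenvalues $\mu_1<0<\mu_2\leq\dots\leq\mu_N$ and $\mu_1+\mu_j>0$ for all $j\geq 2$, the Hessian $-\sum_{i<j}(\mu_i+\mu_j)a_{ij}^2$ is negative \emph{definite}: there are no flat directions at all, and $I$ is a strict local maximum of $O\mapsto\mathrm{Tr}(MO)$. Even in the boundary case $\mu_1+\mu_2=0$, the function is exactly constant along the flat directions, since a rotation by angle $t$ in the $(1,2)$-eigenplane changes $\mathrm{Tr}(MO)$ by $(\mu_1+\mu_2)(\cos t-1)=0$. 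In fact the pairwise condition already forces $I$ to be a \emph{global} maximum of $\mathrm{Tr}(MO)$ on $SO(N)$: for $O\in SO(N)$ one has $\mathrm{Tr}(M)-\mathrm{Tr}(MO)=\sum_i\mu_i(1-O_{ii})\geq |\mu_1|\left[\sum_{i\geq 2}(1-O_{ii})-(1-O_{11})\right]=|\mu_1|\left[(N-2)-\mathrm{Tr}(DO)\right]\geq 0$, where $D=\mathrm{diag}(-1,1,\dots,1)$, because $DO$ has determinant $-1$, hence admits $-1$ as an eigenvalue and has trace at most $N-2$. Thus, were there an orthogonal $U$ with nonzero entries, $S=sgn(U)$, and $M=S^tU$ symmetric, pairwise-positive, but with one negative eigenvalue, it would be a genuine local maximum of the $1$-norm violating $U^tS\geq 0$. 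Consequently, finishing the proof requires a \emph{structural} statement about sign patterns --- that a symmetric matrix of the special form $S^tU$ with $S=sgn(U)$ can never have such a dominated negative eigenvalue --- and not more calculus on $O(N)$. That rigidity step is the real content hidden behind the paper's one-line ``Hessian'' justification and behind the reference \cite{bns}, and it is absent from your proposal.
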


\begin{proof}
This follows from basic differential geometry, with $U_{ij}\neq 0$ coming from a rotation trick, and $U^tS\geq 0$ being the Hessian of the 1-norm around $U$. See \cite{bns}.
\end{proof}

The above proof shows that $U$ is a strict maximizer of the 1-norm, in the sense that we have $||U||_1>||U_\varepsilon||_1$ for $U_\varepsilon\neq U$ close to $U$, when $U^tS>0$. This is important for us, because in what follows we will sometimes need $S$ to be invertible.

In what follows we will be precisely interested in the sign matrices $S$:

\begin{definition}
A matrix $S\in M_N(\pm1)$ is called an {\em almost Hadamard sign pattern} (AHP) if there exists an almost Hadamard matrix $H\in M_N(\mathbb R)$ such that $S_{ij}=sgn(H_{ij})$.
\end{definition}

Here ``P'' comes at the same time from ``pattern'' and ``phase''.

Note that if a sign matrix $S$ is an AHP, then there exists a {\em unique} almost Hadamard matrix $H$ such that $S_{ij}=sgn(H_{ij})$, namely $H =\sqrt{N}Pol(S)$. Since the polar part is not uniquely defined for singular sign matrices, in what follows, we shall mostly be concerned with invertible AHPs and AHMs. 

\section{Submatrices of Hadamard matrices}\label{sec:submatrices}

In this section, we start analyzing square {\em submatrices} of Hadamard matrices. By permuting rows and columns, we can always reduce to the following situation:

\begin{definition}
$D\in M_d(\pm1)$ is called a submatrix of $H\in M_N(\pm1)$ if we have
$$H=\begin{bmatrix}A&B\\C&D\end{bmatrix}$$
up to a permutation of the rows and columns of $H$. We set $r=size(A)=N-d$.
\end{definition}

We recall that the $n$-th Walsh matrix is $W_N=[^+_+{\ }^+_-]^{\otimes n}$, with $N=2^n$. Here, and in what follows, we use the tensor product convention $(H\otimes K)_{ia,jb}=H_{ij}K_{ab}$, with the lexicographic order on the double indices. Here are the first 3 such matrices:
$$W_2=\begin{bmatrix}+&+\\+&-\end{bmatrix},\quad
W_4=\begin{bmatrix}+&+&+&+\\+&-&+&-\\+&+&-&-\\+&-&-&+\end{bmatrix},\quad 
W_8=
\begin{bmatrix}
+&+&+&+&+&+&+&+\\
+&-&+&-&+&-&+&-\\
+&+&-&-&+&+&-&-\\
+&-&-&+&+&-&-&+\\
+&+&+&+&-&-&-&-\\
+&-&+&-&-&+&-&+\\
+&+&-&-&-&-&+&+\\
+&-&-&+&-&+&+&-
\end{bmatrix}$$

Observe that any $D\in M_2(\pm1)$ having distinct columns appears as a submatrix of $W_4$, and that any $D\in M_2(\pm1)$ appears as a submatrix of $W_8$. In fact, we have:

\begin{proposition}
Let $D\in M_d(\pm1)$ be an arbitrary sign matrix.
\begin{enumerate}
\item If $D$ has distinct columns, then $D$ is as submatrix of $W_N$, with $N=2^d$.

\item In general, $D$ appears as submatrix of $W_M$, with $M=2^{d+ \lceil \log_2 d \rceil}$.
\end{enumerate}
\end{proposition}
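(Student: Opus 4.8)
The plan is to work with the explicit formula for the entries of the Walsh matrix. With the tensor convention of the paper, indexing the rows and columns of $W_N$ (where $N=2^n$) by vectors $x,y\in\{0,1\}^n$, one has $(W_N)_{x,y}=\prod_{i=1}^n(-1)^{x_iy_i}=(-1)^{\langle x,y\rangle}$, where $\langle x,y\rangle=\sum_i x_iy_i$ is the $\mathbb Z_2$-inner product. Thus choosing a $d\times d$ submatrix of $W_N$ amounts to selecting row indices $x^{(1)},\dots,x^{(d)}$ and column indices $y^{(1)},\dots,y^{(d)}$ (pairwise distinct within each family), and the resulting submatrix has entries $(-1)^{\langle x^{(a)},y^{(b)}\rangle}$. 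The task is then to realize the prescribed signs $D_{ab}$ by a suitable such choice.

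For part (1), I would take $N=2^d$ and use the standard basis vectors as row indices: $x^{(a)}=e_a$ for $a=1,\dots,d$. Then $\langle e_a,y\rangle=y_a$, so the submatrix entry in row $a$, column $b$ becomes $(-1)^{y^{(b)}_a}$, depending only on the single coordinate $y^{(b)}_a$. Defining $y^{(b)}\in\{0,1\}^d$ by $y^{(b)}_a=(1-D_{ab})/2$ forces the submatrix to equal $D$ exactly, since $(-1)^{y^{(b)}_a}=D_{ab}$. The map sending the $b$-th column of $D$ to $y^{(b)}$ is the coordinatewise bijection $\pm1\to\{0,1\}$, so the $y^{(b)}$ are pairwise distinct precisely when the columns of $D$ are. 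This is where the hypothesis enters, and it guarantees that the chosen columns of $W_N$ are genuinely distinct; the row indices $e_1,\dots,e_d$ are distinct automatically, completing (1).

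For part (2) the only defect is that repeated columns of $D$ produce repeated $y^{(b)}$. I would repair this by enlarging the index length. Set $k=\lceil\log_2 d\rceil$, so that $2^k\geq d$, and work in $W_M$ with $M=2^{d+k}$, indexing by $\{0,1\}^{d+k}$. Keeping the rows $x^{(a)}=e_a$ for $a=1,\dots,d$ (now viewed in $\{0,1\}^{d+k}$), each row still reads off only the first $d$ coordinates of a column index; setting the first $d$ coordinates of $y^{(b)}$ by the same rule $y^{(b)}_a=(1-D_{ab})/2$ again makes the submatrix equal $D$. The last $k$ coordinates are free, and since there are $d$ columns and $2^k\geq d$ available values, I can assign pairwise distinct tails $z^{(1)},\dots,z^{(d)}\in\{0,1\}^k$. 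The full indices $y^{(b)}=(\,(1-D_{\cdot b})/2,\ z^{(b)}\,)$ are then pairwise distinct regardless of any repetition among the columns of $D$, yielding $D$ as a submatrix of $W_M$.

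No serious obstacle is expected: the argument is essentially a reading of the Walsh entries as coordinate projections. The only point requiring care is the bookkeeping in (2)---checking that the disambiguating tails live in the last $k$ coordinates without disturbing the first $d$, and confirming that $2^{\lceil\log_2 d\rceil}\geq d$ indeed suffices to separate the $d$ chosen columns.
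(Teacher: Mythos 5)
Your proof is correct and follows essentially the same route as the paper: part (1) is the paper's argument verbatim (standard basis vectors as row indices, so the columns of the resulting $d\times N$ strip realize all sign vectors in $\{\pm1\}^d$), and part (2) uses the same mechanism of appending $\lceil\log_2 d\rceil$ extra index bits whose contribution to the selected rows is trivial, in order to separate repeated columns. The only cosmetic difference is that the paper packages those extra bits as a tensor factor $W_R\otimes W_N$ (invoking the all-ones first row of $W_R$), whereas you phrase the same padding coordinate-wise.
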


\begin{proof}
(1) Set $N=2^d$. If we use length $d$ bit strings $x,y\in\{0,1\}^d$ as indices, then:
$$(W_N)_{xy}=(-1)^{\sum x_iy_i}$$

Let $\widetilde{W}_N\in M_{d\times N}(\pm1)$ be the submatrix of $W_N$ having as row indices the strings of type $x_i=(\underbrace{0\ldots 0}_i\,1\,\underbrace{0\ldots0}_{N-i-1})$. Then for $i\in\{1,\ldots,d\}$ and $y\in\{0,1\}^d$, we have:
\vskip-3mm
$$(\widetilde{W}_N)_{iy}=(-1)^{y_i}$$

Thus the columns of $\widetilde{W}_N$ are the $N$ elements of $\{\pm 1\}^d$, which gives the result. 

(2) Set $R=2^{\lceil \log_2 d \rceil} \geq d$. Since the first row of $W_R$ contains only $1$s, $W_R\otimes W_N$ contains as a submatrix $R$ copies of $\widetilde{W}_N$, in which $D$ can be embedded, finishing the proof.
\end{proof}

Let us go back now to Definition 2.1, and try to relate the matrices $A,D$ appearing there. The following result, due to Sz\"{o}ll\H{o}si \cite{szo}, is a first one in this direction:

\begin{theorem}\label{thm:szollosi}
If $U=\begin{bmatrix}A&B\\C&D\end{bmatrix}$ is unitary, with $A\in M_r(\mathbb C)$, $D\in M_d(\mathbb C)$, then:
\begin{enumerate}
\item The singular values of $A,D$ are identical, up to $|r-d|$ values of $1$. 

\item $\det A=\det U\cdot\overline{\det D}$, so in particular, $|\det A|=|\det D|$.
\end{enumerate}
\end{theorem}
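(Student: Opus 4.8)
The plan is to extract everything from the two unitarity relations $U^*U=I_N$ and $UU^*=I_N$, read off blockwise (here $^*$ denotes conjugate-transpose). For part (1), comparing the top-left $r\times r$ block of $U^*U=I_N$ gives $A^*A+C^*C=I_r$, while the bottom-right $d\times d$ block of $UU^*=I_N$ gives $CC^*+DD^*=I_d$. Hence $A^*A=I_r-C^*C$ and $DD^*=I_d-CC^*$, so the squared singular values of $A$ are the numbers $1-\lambda$ with $\lambda$ running over the spectrum of $C^*C$, and those of $D$ are the numbers $1-\mu$ with $\mu$ running over the spectrum of $CC^*$. The crucial observation is that $C^*C\in M_r$ and $CC^*\in M_d$ have identical nonzero spectra with identical multiplicities, differing only by $|r-d|$ extra zero eigenvalues on the larger side. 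Applying $\sqrt{1-(\cdot)}$, the shared nonzero eigenvalues produce identical singular values for $A$ and $D$, whereas the surplus zeros produce exactly $|r-d|$ singular values equal to $\sqrt{1-0}=1$. This yields (1).

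For part (2), I would first assume $A$ invertible (which by (1) forces $D$ invertible, since $|\det A|=|\det D|$). The Schur complement formula gives $\det U=\det A\cdot\det(D-CA^{-1}B)$. On the other hand, $U^{-1}=U^*$, so the bottom-right block of $U^{-1}$ is $D^*$; but the block-inverse formula identifies this same block with the Schur inverse $(D-CA^{-1}B)^{-1}$. Equating, $D-CA^{-1}B=(D^*)^{-1}$, whence $\det(D-CA^{-1}B)=1/\overline{\det D}$. Substituting gives $\det U=\det A/\overline{\det D}$, that is $\det A=\det U\cdot\overline{\det D}$. Taking absolute values and using $|\det U|=1$ yields $|\det A|=|\det D|$; this last statement also drops out of (1) directly, the extra singular values being $1$. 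When $A$ is singular we have $\det A=0$, and part (1) forces $\det D=0$ as well, so both sides of the identity vanish and the formula persists by this degeneracy argument.

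The main obstacle I expect is in part (2): one must correctly match the bottom-right block of $U^{-1}=U^*$, namely $D^*$, against the Schur complement inverse, and carefully track the complex conjugation so that the exact phase identity $\det A=\det U\cdot\overline{\det D}$ emerges, rather than only the coarser $|\det A|=|\det D|$. The singular case then has to be patched separately, and it is precisely here that invoking part (1) (to conclude $\det D=0$ from $\det A=0$) is indispensable.
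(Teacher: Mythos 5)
Your proof is correct, and it splits naturally into two comparisons. Part (1) coincides with the paper's argument: both extract $A^*A+C^*C=I_r$ and $CC^*+DD^*=I_d$ from unitarity and invoke the fact that $C^*C$ and $CC^*$ share their nonzero spectrum with multiplicities, the surplus $|r-d|$ zero eigenvalues turning into singular values equal to $\sqrt{1-0}=1$. For part (2), however, you take a genuinely different route. The paper avoids Schur complements entirely: using the relations $AC^*+BD^*=0$ and $CC^*+DD^*=I_d$, it writes down the single factorization
$$\begin{bmatrix}A&0\\C&I\end{bmatrix}=\begin{bmatrix}A&B\\C&D\end{bmatrix}\begin{bmatrix}I&C^*\\0&D^*\end{bmatrix}$$
and takes determinants of both sides, obtaining $\det A=\det U\cdot\overline{\det D}$ in one stroke, with no invertibility hypotheses and no case distinction. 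Your argument instead combines the Schur determinant formula $\det U=\det A\cdot\det(D-CA^{-1}B)$ with the block-inverse identification of the bottom-right block of $U^{-1}=U^*$ as $(D-CA^{-1}B)^{-1}$; this is sound (the Schur complement is automatically invertible since $\det U\neq 0$ and $\det A\neq 0$, and the identification $(D-CA^{-1}B)^{-1}=D^*$ even forces $D$ to be invertible on its own), but it requires $A$ invertible and hence the separate degenerate patch via part (1) when $\det A=0$, which you handle correctly. What your route buys is a stronger intermediate identity, $D-CA^{-1}B=(D^*)^{-1}$, relating the Schur complement of $A$ to $D$ itself --- a fact the paper never states and which is of independent interest. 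What the paper's route buys is uniformity and brevity: the multiplicative identity treats singular and nonsingular $A,D$ on equal footing, so the exact phase formula comes out with no cases at all.
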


\begin{proof}
Here is a simplified proof. From the unitarity of $U$, we get:
\begin{align*}
A^*A+C^*C&=I_r\\
CC^*+DD^*&=I_d\\
AC^*+BD^*&=0_{r\times d}
\end{align*}

(1) This follows from the first two equations, and from the well-known fact that the matrices $CC^*,C^*C$ have the same eigenvalues, up to $|r-d|$ values of $0$.

(2) By using the above unitarity equations, we have:
$$\begin{bmatrix}A&0\\C&I\end{bmatrix}
=\begin{bmatrix}A&B\\C&D\end{bmatrix}
\begin{bmatrix}I&C^*\\0&D^*\end{bmatrix}$$

The result follows by taking determinants.
\end{proof}

\section{Polar parts, norm estimates}\label{sec:polar}

In this section we state and prove our main results. Our first goal is to find a formula for the polar decomposition of $D$. Let us introduce:

\begin{definition}\label{def:XY-A}
Associated to any $A\in M_r(\pm1)$ are the matrices
\begin{eqnarray*}
X_A&=&(\sqrt{N}I_r+\sqrt{A^tA})^{-1}Pol(A)^t\\
Y_A&=&(\sqrt{N}I_r+\sqrt{AA^t})^{-1}
\end{eqnarray*}
depending on a parameter $N$.
\end{definition}

Observe that, in terms of the polar decomposition $A=VP$, we have:
\begin{eqnarray*}
X_A&=&(\sqrt{N}+P)^{-1}V^t\\
Y_A&=&V(\sqrt{N}+P)^{-1}V^t
\end{eqnarray*}

The idea now is that, under the general assumptions of Theorem 2.3, the polar parts of $A,D$ are related by a simple formula, with the passage $Pol(A)\to Pol(D)$ involving the above matrices $X_A,Y_A$. In what follows we will focus on the case that we are interested in, namely with $U\in U(N)$ replaced by $U=\sqrt{N}H$ with $H\in M_N(\pm1)$ Hadamard.

In the non-singular case, we have the following lemma:

\begin{lemma}\label{lem:polar-formula}
If $H=\begin{bmatrix}A&B\\C & D\end{bmatrix}\in M_N(\pm1)$ is Hadamard, with $A\in M_r(\pm1)$ invertible, $D\in M_d(\pm1)$, and $\|A\| < \sqrt N$, the polar decomposition $D=UT$ is given by $$U=\frac{1}{\sqrt{N}}(D-E)\qquad T=\sqrt{N}I_d-S$$ 
with $E=CX_AB$ and $S=B^tY_AB$.
\end{lemma}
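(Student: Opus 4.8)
The plan is to exploit the Hadamard (hence unitary) structure of $U=H/\sqrt N$ directly. Writing the unitarity relations for the block decomposition $U=\frac{1}{\sqrt N}\begin{bmatrix}A&B\\C&D\end{bmatrix}$, we get, as in the proof of Theorem \ref{thm:szollosi}, the identities $A^tA+C^tC=NI_r$, $CC^t+DD^t=NI_d$, $B^tB+D^tD=NI_d$, together with the mixed relations $A^tB+C^tD=0$ and $AC^t+BD^t=0$. The strategy is to \emph{guess} the stated $U$ and $T$ and then \emph{verify} that $D=UT$, that $U$ is orthogonal, and that $T$ is positive semi-definite; uniqueness of the polar decomposition (Proposition 1.2, using invertibility of $A$ and the hypothesis $\|A\|<\sqrt N$, which forces $T=\sqrt N I_d-S$ to be positive definite) then identifies the guessed pair as the genuine polar decomposition.

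First I would establish that $T=\sqrt N I_d - S = \sqrt N I_d - B^tY_AB$ is indeed $\sqrt{D^tD}$, equivalently that $T\geq 0$ and $T^2 = D^tD = NI_d - B^tB$. Computing $T^2 = NI_d - 2\sqrt N B^tY_AB + B^tY_AB B^tY_AB$, I want to reduce this to $NI_d - B^tB$, which amounts to the scalar-type identity $2\sqrt N\,Y_A - Y_A\,(BB^t)\,Y_A = I_r$ after the substitution $BB^t = NI_r - AA^t$ (from the row-orthogonality $AA^t+BB^t=NI_r$). Using the polar form $A=VP$ so that $Y_A = V(\sqrt N + P)^{-1}V^t$ and $BB^t = NI_r - VP^2V^t = V(NI_r - P^2)V^t$, everything diagonalizes simultaneously in the $V$-basis, and the claim becomes the purely scalar identity $2\sqrt N(\sqrt N+p)^{-1} - (N-p^2)(\sqrt N+p)^{-2} = 1$ for each singular value $p$ of $A$, which is elementary. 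Positivity of $T$ follows because $\|A\|<\sqrt N$ gives $p<\sqrt N$, so each eigenvalue $\sqrt N - (N-p^2)/\cdots$ of $T$ is positive.

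Next I would verify $D = UT$, i.e.\ that $D = \frac{1}{\sqrt N}(D-E)(\sqrt N I_d - S)$ with $E=CX_AB$. Expanding, this is equivalent to $ES = \sqrt N E - DS$, and I would substitute $E=CX_AB$, $S=B^tY_AB$, then use the mixed unitarity relation $C^tD=-A^tB$ (equivalently $DB^t = -CA^t$ after transposing and using the other mixed relation) to convert the $D$-terms into $C$-terms, again diagonalizing via $A=VP$ and reducing to a scalar identity relating $X_A$ and $Y_A$. Finally, orthogonality of $U$ should follow either from $U^tU = \frac{1}{N}(D-E)^t(D-E) = I_d$ by a similar computation, or more cheaply from the fact that once $T=\sqrt{D^tD}$ is confirmed positive definite and $D=UT$ holds with $T$ invertible, one has $U = DT^{-1}$, and $U^tU = T^{-1}D^tDT^{-1} = T^{-1}T^2T^{-1}=I_d$ automatically.

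The main obstacle I expect is bookkeeping rather than conceptual: correctly deploying the \emph{mixed} orthogonality relations $A^tB+C^tD=0$ and $AC^t+BD^t=0$ to rewrite the blocks $C$ and $D$ in terms of one another, so that after substituting $A=VP$ every expression collapses onto the common eigenbasis and reduces to one-variable algebraic identities in the singular values $p$ of $A$. The hypothesis $\|A\|<\sqrt N$ is doing real work here, guaranteeing $\sqrt N I_r \pm P$ is invertible and $T>0$, which is precisely what legitimizes invoking uniqueness of the polar part; without it one would only get a polar \emph{pair} rather than the canonical decomposition $Pol(D)$.
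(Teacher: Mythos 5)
Your proposal is correct, and it takes a genuinely different route from the paper's. The paper argues constructively: starting from the singular value decomposition $A=V\,diag(s_i)\,X^t$, it uses the block orthogonality relations to build compatible SVDs of $B$, $C$, $D$, the crux being the alignment of the left singular basis of $C$ with that of $D$ via $CA^t+DB^t=0$ (this is exactly where the hypotheses enter, through $s_i\sqrt{N-s_i^2}>0$); the formulas $E=CX_AB$ and $S=B^tY_AB$ are then read off in the common bases. You instead guess the pair $(U,T)$ and verify the facts that characterize the polar decomposition: $T^2=D^tD$, $T>0$, and $UT=D$, after which $U=DT^{-1}$ is automatically orthogonal and uniqueness identifies the pair. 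Your reductions do close as claimed, because $Y_A$, $BB^t=NI_r-AA^t$ and $X_A(BB^t)$ are simultaneous functions of $AA^t$ (diagonal in the $V$-basis after writing $A=VP$): in particular $2\sqrt{N}Y_A-Y_A(BB^t)Y_A=I_r$ gives $S^2=2\sqrt{N}S-B^tB$, hence $T^2=NI_d-B^tB=D^tD$. Two corrections. First, a sign slip: $D=UT$ is equivalent to $(E-D)S=\sqrt{N}E$, i.e.\ $ES=\sqrt{N}E+DS$, not $ES=\sqrt{N}E-DS$; the verification then runs $ES-DS=C\left[X_A(BB^t)+A^t\right]Y_AB=C\left[(\sqrt{N}-P)V^t+PV^t\right]Y_AB=\sqrt{N}CX_AB$, using $DB^t=-CA^t$. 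Second, your closing remark about $\|A\|<\sqrt{N}$ ``doing real work'' is miscalibrated: in your approach that hypothesis is never needed. The inverse $(\sqrt{N}I_r+P)^{-1}$ exists for any positive semi-definite $P$, and positivity of $T$ follows from invertibility of $A$ alone, via $\|S\|\leq\|Y_A\|\,\|B\|^2=(N-s_{\min}^2)/(\sqrt{N}+s_{\min})=\sqrt{N}-s_{\min}<\sqrt{N}$, where $s_{\min}>0$ is the smallest singular value of $A$ (note that $\|A\|\leq\sqrt{N}$ holds automatically, since $AA^t\leq NI_r$). So your argument in fact proves the lemma assuming only that $A$ is invertible, a mild strengthening, whereas the paper's basis-alignment step genuinely requires $\|A\|<\sqrt{N}$. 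What the paper's longer route buys in exchange is the explicit SVD data for all four blocks --- for instance that the spectrum of $T$ consists of the $s_i$ together with $d-r$ copies of $\sqrt{N}$ --- which is reused later for the eigenvalue and determinant formulas of Section \ref{sec:small-r}.
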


\begin{proof}
Since $H$ is Hadamard, we can use the formulae coming from:
$$\begin{bmatrix}A&B\\C&D\end{bmatrix}\begin{bmatrix}A^t&C^t\\B^t&D^t\end{bmatrix}=\begin{bmatrix}A^t&C^t\\B^t&D^t\end{bmatrix}\begin{bmatrix}A&B\\C&D\end{bmatrix}=\begin{bmatrix}N&0\\0&N\end{bmatrix}$$

We start from the singular value decomposition of $A$: 
$$A=Vdiag(s_i)X^t$$

Here $V,X \in O(r)$, $s_i\in(0,\|A\|]$. From $AA^t+BB^t = NI_r$ we get $BB^t = V diag(N-s_i^2)V^t$, so the singular value decomposition of $B$ is as follows, with $Y\in O(d)$:
$$B = V\begin{bmatrix}diag(\sqrt{N-s_i^2})&0_{r\times(d-r)}\end{bmatrix}Y^t$$

Similarly, from $A^tA+C^tC = I_r$, we infer the singular value decomposition for $C$, the result being that there exists an orthogonal matrix $\widetilde{Z} \in O(d)$ such that: 
$$C=-\widetilde Z\begin{bmatrix}diag(\sqrt{N-s_i^2})\\0_{(d-r)\times r}\end{bmatrix}X^t$$

From $B^tB+D^tD = NI_d$, we obtain: 
$$D^tD = Y (diag(s_i^2)\oplus N I_{(d-r)}) Y^t$$

Tus the polar decomposition of $D$ reads:
$$D = UY (diag(s_i)\oplus \sqrt N I_{(d-r)}) Y^t$$

Let $Z = UY$ and use the orthogonality relation $CA^t+DB^t=0_{d \times r}$ to obtain:
$$\widetilde Z \begin{bmatrix}diag(s_i\sqrt{N-s_i^2})\\0_{(d-r)\times r}\end{bmatrix} = Z \begin{bmatrix}diag(s_i\sqrt{N-s_i^2})\\0_{(d-r)\times r}\end{bmatrix}$$

From the hypothesis, we have $s_i\sqrt{N-s_i^2} > 0$ and thus  $Z^t\widetilde Z = I_r \oplus Q$, for some orthogonal matrix $Q \in O(d)$. Plugging $\widetilde Z = Z(I_r \oplus Q)$ in the singular value decomposition formula for $C$, we obtain:
$$C=-Z(I_r \oplus Q)\begin{bmatrix}diag(\sqrt{N-s_i^2})\\0_{(d-r)\times r}\end{bmatrix}X^t = -Z\begin{bmatrix}diag(\sqrt{N-s_i^2})\\0_{(d-r)\times r}\end{bmatrix}X^t$$

To summarize, we have found $V,X \in O(r)$ and $Y,Z \in O(d)$ such that:
\begin{align*}
A &=Vdiag(s_i)X^t\\
B &= V\begin{bmatrix}diag(\sqrt{N-s_i^2})&0_{r\times(d-r)}
\end{bmatrix}Y^t\\
C &=-Z\begin{bmatrix}diag(\sqrt{N-s_i^2})\\0_{(d-r)\times r}\end{bmatrix}X^t\\
D &= Z (diag(s_i)\oplus \sqrt N I_{(d-r)}) Y^t
\end{align*}

Now with $U,T,E,S$ defined as in the statement, we obtain:
\begin{eqnarray*}
U&=&ZY^t\\
E&=&Z(diag(\sqrt{N}-s_i)\oplus0_{d-r})Y^t\\
\sqrt{A^tA}&=&Xdiag(s_i)X^t\\
(\sqrt{N}I_r+\sqrt{A^tA})^{-1}&=&Xdiag(1/(\sqrt{N}+s_i))X^t\\
X_A&=&Xdiag(1/(\sqrt{N}+s_i))V^t\\
CX_AB&=&Z(diag(\sqrt{N}-s_i)\oplus0_{d-r})Y^t
\end{eqnarray*}

Thus we have $E=CX_AB$, as claimed. Also, we have:
\begin{eqnarray*}
T&=&Y(diag(s_i)\oplus\sqrt{N}I_{d-r})Y^t\\
S&=&Y(diag(\sqrt{N}-s_i)\oplus0_{d-r})Y^t\\
\sqrt{AA^t}&=&Vdiag(s_i)V^t\\
Y_A&=&Vdiag(1/(\sqrt{N}+s_i))V^t\\
B^tY_AB&=&Y(diag(\sqrt{N}-s_i)\oplus0_{d-r})Y^t
\end{eqnarray*}

Hence, $S=B^tY_AB$, as claimed, and we are done.
\end{proof}

Note that, in the above statement, when $r<\sqrt N$, the condition $\|A\| < \sqrt N$ is automatically satisfied. 

As a first application, let us try to find out when $D$ is AHP. For this purpose, we must estimate the quantity $||E||_\infty=\max_{ij}|E_{ij}|$:

\begin{lemma}\label{lem:bound-E}
Let $H=\begin{bmatrix}A&B\\C & D\end{bmatrix}\in M_N(\pm1)$ be a Hadamard matrix, with $A\in M_r(\pm1)$, $D\in M_d(\pm1)$ and $r\leq d$. Then, $Pol(D)=\frac{1}{\sqrt{N}}(D-E)$, with $E$ satisfying:
\begin{enumerate}
\item $||E||_\infty\leq\frac{r\sqrt{r}}{\sqrt{r}+\sqrt{N}}$ when $A$ is Hadamard.

\item $||E||_\infty\leq\frac{r^2c\sqrt{N}}{N-r^2}$ if $r^2<N$, with $c=||Pol(A)-\frac{A}{\sqrt{N}}||_\infty$.

\item $||E||_\infty\leq\frac{r^2(1+\sqrt{N})}{N-r^2}$ if $r^2<N$.
\end{enumerate}
\end{lemma}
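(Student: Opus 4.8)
The plan is to reduce all three bounds to a single entrywise estimate on the $r\times r$ matrix $X_A$, exploiting that $B,C$ have $\pm1$ entries. Writing $E_{ij}=\sum_{k,l}C_{ik}(X_A)_{kl}B_{lj}$ and using $|C_{ik}|=|B_{lj}|=1$, I get at once
$$\|E\|_\infty\le\sum_{k,l}|(X_A)_{kl}|\le r^2\|X_A\|_\infty,$$
so everything comes down to bounding $\|X_A\|_\infty=\max_{kl}|(X_A)_{kl}|$. This already settles (1): when $A$ is Hadamard one has $\sqrt{A^tA}=\sqrt r\,I_r$ and $Pol(A)=A/\sqrt r$, whence $X_A=\frac{1}{\sqrt r(\sqrt N+\sqrt r)}A^t$ has all entries of modulus $\frac{1}{\sqrt r(\sqrt N+\sqrt r)}$, and $r^2\|X_A\|_\infty=\frac{r\sqrt r}{\sqrt r+\sqrt N}$.

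For the general bound (2) the idea is to produce a self-referential identity for $X_A$ in which the ``source'' term is governed by $c$. Starting from $(\sqrt N I_r+P)X_A=Pol(A)^t$ with $P=\sqrt{A^tA}$, I left-multiply by $(\sqrt N I_r-P)$ and use $P^2=A^tA$ together with $P\,Pol(A)^t=A^t$ to obtain
$$(N I_r-A^tA)X_A=\sqrt N\,Pol(A)^t-A^t=\sqrt N\Big(Pol(A)-\tfrac{A}{\sqrt N}\Big)^t,$$
that is, $X_A=\frac{1}{\sqrt N}W^t+\frac1N A^tA\,X_A$, where $W=Pol(A)-A/\sqrt N$ and $\|W\|_\infty=c$. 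Now I estimate entrywise: each entry of $A^tA$ is an inner product of two columns of $A\in M_r(\pm1)$, hence has modulus at most $r$, so $\|A^tA\,X_A\|_\infty\le r^2\|X_A\|_\infty$. The identity then yields the self-improving inequality
$$\|X_A\|_\infty\le\frac{c}{\sqrt N}+\frac{r^2}{N}\|X_A\|_\infty,$$
which, since $r^2<N$, rearranges to $\|X_A\|_\infty\le\frac{c\sqrt N}{N-r^2}$. Combined with $\|E\|_\infty\le r^2\|X_A\|_\infty$ this is exactly (2). Finally (3) is a one-line corollary: the entries of the orthogonal matrix $Pol(A)$ are bounded by $1$ and those of $A/\sqrt N$ by $1/\sqrt N$, so $c\le 1+\frac{1}{\sqrt N}$, and substituting this into (2) gives $\frac{r^2(\sqrt N+1)}{N-r^2}$.

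Throughout, the hypothesis $r^2<N$ (resp. $A$ Hadamard, where $\|A\|=\sqrt r<\sqrt N$) guarantees $\|A\|\le\|A\|_F=r<\sqrt N$, so that Lemma \ref{lem:polar-formula} applies and $N I_r-A^tA$ is invertible. The main obstacle I expect is the algebraic heart of step (2): finding the right self-referential identity. The naive route of substituting $Pol(A)^t=A^t/\sqrt N+W^t$ directly into $X_A=(\sqrt N I_r+P)^{-1}Pol(A)^t$ leaves an uncontrolled $A^t/\sqrt N$ piece, and a single resolvent recursion produces the wrong geometric ratio $r/\sqrt N$; it is the second iteration, which replaces $P^2$ by $A^tA$ and $P\,Pol(A)^t$ by $A^t$, that both absorbs the unwanted term into $\sqrt N\,W^t$ and turns the ratio into $r^2/N$. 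The remaining care is purely in the entrywise (non-submultiplicative) bookkeeping, where the bound $|(A^tA)_{ik}|\le r$ must be used rather than an operator-norm estimate, in order to land the denominator $N-r^2$ exactly.
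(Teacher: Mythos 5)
Your proof is correct and follows essentially the same route as the paper: the same reduction $\|E\|_\infty=\|CX_AB\|_\infty\le r^2\|X_A\|_\infty$, the same key identity $(NI_r-A^tA)X_A=\sqrt{N}\,(Pol(A)-A/\sqrt{N})^t$ obtained by multiplying through by $\sqrt{N}I_r-\sqrt{A^tA}$, and the same entrywise bound $\|A^tA\|_\infty\le r$. The only divergence is cosmetic: where the paper expands $(I_r-A^tA/N)^{-1}$ in a Neumann series with the bound $\|(A^tA)^k\|_\infty\le r^{2k-1}$, you rearrange the self-referential inequality $\|X_A\|_\infty\le c/\sqrt{N}+(r^2/N)\|X_A\|_\infty$, an equivalent (and arguably tidier) way of absorbing the resolvent that lands on the same constant $c\sqrt{N}/(N-r^2)$.
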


\begin{proof}
We use the basic fact that for two matrices $X\in M_{p\times r}(\mathbb C),Y\in M_{r\times q}(\mathbb C)$ we have $||XY||_\infty\leq r||X||_\infty||Y||_\infty$. Thus, according to Lemma \ref{lem:polar-formula}, we have:
$$||E||_\infty=||CX_AB||_\infty\leq r^2||C||_\infty||X_A||_\infty||B||_\infty=r^2||X_A||_\infty$$

(1) If $A$ is Hadamard, $AA^t =rI_r$, $Pol(A)=A/\sqrt{r}$ and thus: 
$$X_A=(\sqrt{N}I_r+\sqrt{r}I_r)^{-1}\frac{A^t}{\sqrt{r}}=\frac{A^t}{r+\sqrt{rN}}$$

Thus $||X_A||_\infty=\frac{1}{r+\sqrt{rN}}$, which gives the result.

(2) According to the definition of $X_A$, we have:
\begin{eqnarray*}
X_A
&=&(\sqrt{N}I_r+\sqrt{A^tA})^{-1}Pol(A)^t\\
&=&(NI_r-A^tA)^{-1}(\sqrt{N}I_r-\sqrt{A^tA})Pol(A)^t\\
&=&(NI_r-A^tA)^{-1}(\sqrt{N}Pol(A)-A)^t
\end{eqnarray*}

We therefore obtain:
$$||X_A||_\infty\leq r||(NI_r-A^tA)^{-1}||_\infty||\sqrt{N}Pol(A)-A||_\infty
=\frac{rc}{\sqrt{N}}\Big|\Big|\left(I_r-\frac{A^tA}{N}\right)^{-1}\Big|\Big|_\infty$$

Now by using $||A^tA||_\infty\leq r$, we obtain:
$$\Big|\Big|\left(I_r-\frac{A^tA}{N}\right)^{-1}\Big|\Big|_\infty\leq\sum_{k=0}^\infty\frac{||(A^tA)^k||_\infty}{N^k}\leq\sum_{k=0}^\infty\frac{r^{2k-1}}{N^k}=\frac{1}{r}\cdot\frac{1}{1-r^2/N}=\frac{N}{rN-r^3}$$

Thus $||X_A||_\infty\leq \frac{rc}{\sqrt{N}}\cdot\frac{N}{rN-r^3}=\frac{c\sqrt{N}}{N-r^2}$, which gives the result.

(3) This follows from (2), because $c\leq||Pol(A)||_\infty+||A/\sqrt{N}||_\infty\leq 1+\frac{1}{\sqrt{N}}$.
\end{proof}

We can now state and prove our main result in this paper:

\begin{theorem}\label{thm:AHP}
Let $H=\begin{bmatrix}A&B\\C & D\end{bmatrix}$ be Hadamard, with $A\in M_r(\pm1),H\in M_N(\pm1)$.
\begin{enumerate}
\item If $A$ is Hadamard, and $N>r(r-1)^2$, then $D$ is AHP. 

\item If $N>\frac{r^2}{4}(x+\sqrt{x^2+4})^2$, where $x=r||Pol(A)-\frac{A}{\sqrt{N}}||_\infty$, then $D$ is AHP.

\item If $N>\frac{r^2}{4}(r+\sqrt{r^2+8})^2$, then $D$ is AHP.
\end{enumerate}
\end{theorem}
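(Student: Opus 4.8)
The plan is to reduce everything to the single numerical condition $||E||_\infty<1$, where $E$ is the matrix appearing in Lemmas \ref{lem:polar-formula} and \ref{lem:bound-E}. Concretely, I would first isolate the following claim: \emph{if $A$ is invertible with $||A||<\sqrt N$ and $||E||_\infty<1$, then $D$ is AHP}. Granting this, each of the three statements becomes a routine check that the relevant hypothesis on $N$ forces the corresponding bound of Lemma \ref{lem:bound-E} to drop below $1$.

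To prove the claim, set $U=Pol(D)$. By Lemma \ref{lem:polar-formula} we have $U=\frac{1}{\sqrt N}(D-E)$, so $U_{ij}=\frac{1}{\sqrt N}(D_{ij}-E_{ij})$. Since $D_{ij}=\pm1$ and $||E||_\infty<1$, each $U_{ij}$ is nonzero and has the same sign as $D_{ij}$; hence $U_{ij}\neq0$ for all $i,j$, and the sign matrix of $U$ is exactly $S=D$. It then remains to verify the Hessian condition $U^tS\geq0$ of the characterization of rescaled AHM. But writing the polar decomposition as $D=UT$ with $T=\sqrt{D^tD}\geq0$ and using $U^tU=I_d$, I obtain $U^tS=U^tD=U^t(UT)=T\geq0$. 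Moreover, since $A$ is invertible with $||A||<\sqrt N$, the singular value computation in the proof of Lemma \ref{lem:polar-formula} gives $D^tD=Y(diag(s_i^2)\oplus NI_{d-r})Y^t>0$, so in fact $T>0$: thus $U$ is a \emph{strict} local maximizer of the $1$-norm, i.e.\ an invertible AHM with sign pattern $D$, and $D$ is AHP.

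It remains to check, in each case, that the hypotheses of Lemma \ref{lem:polar-formula} hold ($A$ invertible with $||A||<\sqrt N$) and that $||E||_\infty<1$. For a $\pm1$ matrix one has $||A||\leq r$, so whenever $r^2<N$ the condition $||A||<\sqrt N$ is automatic; in case (1), $A$ Hadamard gives $||A||=\sqrt r<\sqrt N$, and in cases (2)--(3) the stated thresholds already force $N>r^2$. The three bounds are then:
\begin{enumerate}
\item Lemma \ref{lem:bound-E}(1) gives $||E||_\infty\leq\frac{r\sqrt r}{\sqrt r+\sqrt N}$, and $\frac{r\sqrt r}{\sqrt r+\sqrt N}<1$ is equivalent, after clearing the denominator and squaring, to $N>r(r-1)^2$.
\item With $c=||Pol(A)-\frac{A}{\sqrt N}||_\infty$ and $x=rc$, Lemma \ref{lem:bound-E}(2) gives $||E||_\infty\leq\frac{rx\sqrt N}{N-r^2}$; writing $n=\sqrt N$, the inequality $\frac{rxn}{n^2-r^2}<1$ reads $n^2-rxn-r^2>0$, whose larger root is $\frac{r}{2}(x+\sqrt{x^2+4})$, giving $N>\frac{r^2}{4}(x+\sqrt{x^2+4})^2$.
\item Lemma \ref{lem:bound-E}(3) gives $||E||_\infty\leq\frac{r^2(1+\sqrt N)}{N-r^2}$, and with $n=\sqrt N$ the inequality $\frac{r^2(1+n)}{n^2-r^2}<1$ reads $n^2-r^2n-2r^2>0$, with larger root $\frac{r}{2}(r+\sqrt{r^2+8})$, giving $N>\frac{r^2}{4}(r+\sqrt{r^2+8})^2$.
\end{enumerate}

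There is no serious obstacle once Lemmas \ref{lem:polar-formula} and \ref{lem:bound-E} are in hand; the single conceptual point, and the step I would write out most carefully, is the identity $U^tS=T\geq0$, which shows that as soon as $||E||_\infty<1$ forces $sgn(Pol(D))=D$, the AHM inequality holds \emph{automatically} (and strictly, via $T>0$). The remaining work is the purely routine verification that the stated thresholds on $N$ are exactly the solutions of the three quadratic inequalities above.
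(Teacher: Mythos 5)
Your proposal is correct and follows essentially the same route as the paper: reduce each case to the bound $||E||_\infty<1$ via Lemma \ref{lem:bound-E}, then verify that the stated thresholds on $N$ are exactly the solutions of the resulting quadratic inequalities in $\sqrt{N}$. The only difference is that you spell out explicitly the step the paper leaves implicit --- that $||E||_\infty<1$ forces $sgn(Pol(D))=D$ and $Pol(D)^tD=T\geq0$, hence $D$ is AHP --- which is a welcome clarification rather than a departure.
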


\begin{proof}
(1) This follows from Lemma \ref{lem:bound-E} (1), because:
$$\frac{r\sqrt{r}}{\sqrt{r}+\sqrt{N}}<1\iff r<1+\sqrt{N/r}\iff r(r-1)^2<N$$

(2) This follows from Lemma \ref{lem:bound-E} (2), because:
$$\frac{r^2c\sqrt{N}}{N-r^2}<1\iff N-r^2c\sqrt{N}>r^2\iff (2\sqrt{N}-r^2c)^2>r^4c^2+4r^2$$

Indeed, this is equivalent to $2\sqrt{N}>r^2c+r\sqrt{r^2c^2+4}=r(x+\sqrt{x^2+4})$, for $x=rc=r||Pol(A)-\frac{A}{\sqrt{N}}||_\infty$.

(3) This follows from Lemma \ref{lem:bound-E} (3), because:
$$\frac{r^2(1+\sqrt N)}{N-r^2}<1\iff N-r^2\sqrt{N}>2r^2\iff(2\sqrt{N}-r^2)^2>r^4+8r^2$$

Indeed, this is equivalent to $2\sqrt{N}>r^2+r\sqrt{r^2+8}$, which gives the result.
\end{proof}

As a technical comment, for $A\in M_r(\pm1)$ Hadamard, Lemma \ref{lem:bound-E} (2) gives:
$$||E||_\infty\leq\frac{r^2\sqrt{N}}{N-r^2}\left(\frac{1}{\sqrt{r}}-\frac{1}{\sqrt{N}}\right)=\frac{r\sqrt{r}N-r^2}{N-r^2}$$

Thus $||E||_\infty<1$ for $N>r^3$, which is slightly weaker than Theorem \ref{thm:AHP} (1). 

\section{Complements of small sign patterns}\label{sec:small-r}

In view of the results above, it is convenient to make the following convention:

\begin{definition}
We denote by $\{x\}_{m \times n} \in M_{m \times n}(\mathbb R)$ the all-$x$, $m \times n$ matrix, and by
$$\begin{Bmatrix}x_{11}&\ldots&x_{1l}\\ \ldots&\ldots&\ldots\\ x_{k1}&\ldots&x_{kl}\end{Bmatrix}_{(m_1,\ldots,m_k) \times (n_1, \ldots, n_l)}$$
the matrix having all-$x_{ij}$ rectangular blocks $X_{ij}=\{x_{ij}\}_{m_i \times n_j} \in M_{m_i \times n_j}(\mathbb R)$, of prescribed size. In the case of square diagonal blocks, we simply write $\{x\}_n= \{x\}_{n \times n}$ and 
$$\begin{Bmatrix}x_{11}&\ldots&x_{1k}\\ \ldots&\ldots&\ldots\\ x_{kk}&\ldots&x_{kk}\end{Bmatrix}_{n_1, \ldots n_k} = \begin{Bmatrix}x_{11}&\ldots&x_{1k}\\ \ldots&\ldots&\ldots\\ x_{k1}&\ldots&x_{kk}\end{Bmatrix}_{(n_1,\ldots,n_k) \times (n_1, \ldots, n_k)}$$
\end{definition}

Modulo equivalence, the $\pm1$ matrices of size $r=1,2$ are as follows:
$$\begin{bmatrix}+\end{bmatrix}_{(1)}\qquad
\begin{bmatrix}+&+\\+&-\end{bmatrix}_{(2)}\qquad
\begin{bmatrix}+&+\\+&+\end{bmatrix}_{(2')}$$

In the cases $(1)$ and $(2)$ above, where the matrix $A$ is invertible, the spectral properties of their complementary matrices are as follows:

\begin{theorem}\label{thm:r-12}
For the $N\times N$ Hadamard matrices of type
$$\begin{bmatrix}+&+\\ +&D\end{bmatrix}_{(1)}\qquad
\begin{bmatrix}
+&+&+&+\\
+&-&+&-\\
+&+&D_{00}&D_{01}\\
+&-&D_{10}&D_{11}
\end{bmatrix}_{(2)}
$$
the polar decomposition $D=UT$ with $U=\frac{1}{\sqrt{N}}(D-E)$, $T=\sqrt{N}I-S$ is given by:
$$E_{(1)}=\begin{Bmatrix}\frac{1}{1+\sqrt{N}}\end{Bmatrix}_{N-1}\qquad
E_{(2)}=\frac{2}{2+\sqrt{2N}}\begin{Bmatrix}1&1\\1&-1\end{Bmatrix}_{N/2-1,N/2-1}$$
$$S_{(1)}=\begin{Bmatrix}\frac{1}{1+\sqrt{N}}\end{Bmatrix}_{N-1}\qquad
S_{(2)}=\frac{2}{\sqrt{2}+\sqrt{N}}\begin{Bmatrix}1&0\\0&1\end{Bmatrix}_{N/2-1,N/2-1}$$
In particular, all the matrices $D$ above are AHP.
\end{theorem}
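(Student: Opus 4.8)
The plan is to read off $E$ and $S$ directly from Lemma~\ref{lem:polar-formula}, exploiting the fact that both quantities, $E=CX_AB$ and $S=B^tY_AB$, depend only on the corner blocks $A,B,C$ and not at all on the interior block $D$. So the whole computation reduces to pinning down $A,B,C$ in a convenient normal form and then multiplying block matrices; the arbitrary blocks $D_{00},D_{01},D_{10},D_{11}$ play no role until the very last step.

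First I would normalize. Up to sign changes of rows and columns I may assume the first row and first column of $H$ consist only of $+$ entries; since $A$ is invertible this forces $A=\begin{bmatrix}+\end{bmatrix}$ in case $(1)$ and $A=\begin{bmatrix}+&+\\+&-\end{bmatrix}$ in case $(2)$. In case $(1)$ the blocks $B=\{+\}_{1\times(N-1)}$ and $C=\{+\}_{(N-1)\times1}$ are then completely forced. In case $(2)$ the orthogonality of the first two rows of $H$ makes the second row of $B$ carry equally many $+$ and $-$ entries, so after permuting the last $N-2$ columns I may take $B=\begin{Bmatrix}+&+\\+&-\end{Bmatrix}_{(1,1)\times(N/2-1,N/2-1)}$; the symmetric argument on the first two columns, after permuting the last $N-2$ rows, gives $C=\begin{Bmatrix}+&+\\+&-\end{Bmatrix}_{(N/2-1,N/2-1)\times(1,1)}$. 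These are exactly the permutations that split $D$ into the four displayed blocks $D_{00},D_{01},D_{10},D_{11}$.

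Next I would compute $X_A,Y_A$ from Definition~\ref{def:XY-A}, which is easy because $A$ is Hadamard: in case $(1)$ one gets $X_A=Y_A=(1+\sqrt N)^{-1}$, and each product $E=CX_AB$, $S=B^tY_AB$ is $(1+\sqrt N)^{-1}$ times the all-ones matrix $\{1\}_{N-1}$, giving $E_{(1)}$ and $S_{(1)}$. In case $(2)$, using $A^tA=AA^t=2I_2$, $Pol(A)=A/\sqrt2$ and $A^t=A$, one finds $X_A=A/(2+\sqrt{2N})$ and $Y_A=(\sqrt N+\sqrt2)^{-1}I_2$. The only substantive step is the block multiplication: right-multiplication by $X_A$ collapses the two row-groups of $C$ onto the two coordinate directions, so $CX_A=\frac{2}{2+\sqrt{2N}}\begin{Bmatrix}1&0\\0&1\end{Bmatrix}_{(N/2-1,N/2-1)\times(1,1)}$, and multiplying by $B$ yields $E_{(2)}$; likewise $B^tB=2\begin{Bmatrix}1&0\\0&1\end{Bmatrix}_{N/2-1,N/2-1}$, and scaling by $Y_A$ yields $S_{(2)}$. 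I expect this bookkeeping — keeping the row/column groupings consistent and tracking the $\pm$ signs through the products — to be the only real source of error, since everything else is forced.

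Finally, for the AHP claim I would note that $D$ is never needed explicitly. From the formulas $\|E\|_\infty=(1+\sqrt N)^{-1}$ in case $(1)$ and $\|E\|_\infty=2/(2+\sqrt{2N})$ in case $(2)$, both $<1$; hence $sgn(D-E)=D$ entrywise, so $U=Pol(D)=\frac{1}{\sqrt N}(D-E)$ has no zero entries and sign pattern exactly $D$. Moreover $U^tD=U^t(UT)=T=\sqrt{D^tD}\geq0$, which is precisely the Hessian positivity in the characterization of local maximizers of the $1$-norm (\cite{bns}); thus $\sqrt d\,Pol(D)$ is an almost Hadamard matrix with sign pattern $D$, i.e. $D$ is AHP. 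Alternatively, since $A$ is Hadamard of size $r\in\{1,2\}$ and $N>r(r-1)^2$ holds trivially, this last assertion is just a special case of Theorem~\ref{thm:AHP}(1).
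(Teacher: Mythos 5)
Your proposal is correct and takes essentially the same route as the paper's own proof: both apply Lemma~\ref{lem:polar-formula} with the Hadamard corners $A_{(1)}=[+]$ and $A_{(2)}=\begin{bmatrix}+&+\\+&-\end{bmatrix}$, compute $X_A=\frac{A^t}{r+\sqrt{rN}}$ and $Y_A=\frac{I_r}{\sqrt{r}+\sqrt{N}}$, and obtain $E=CX_AB$ and $S=B^tY_AB$ by the same block multiplications, with the block sizes fixed by orthogonality of the first rows. Your explicit justification of the final AHP claim (via $\|E\|_\infty<1$, hence $sgn(U)=D$ and $U^tD=T\geq 0$, or alternatively by invoking Theorem~\ref{thm:AHP}(1)) simply spells out what the paper leaves implicit.
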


\begin{proof}
For $A\in M_r(\pm1)$ Hadamard, the quantities in Definition \ref{def:XY-A} are:
$$X_A=\frac{A^t}{r+\sqrt{rN}}\qquad Y_A=\frac{I_r}{\sqrt{r}+\sqrt{N}}$$

These formulae follow indeed from $AA^t=A^tA=rI_r$ and $Pol(A)=A/\sqrt{r}$.

(1) Using the notation introduced in Definition \ref{def:XY-A}, we have here $B_{(1)} = \{1\}_{1 \times N-1}$ and $C_{(1)} = B_{(1)}^t$. Since $A_{(1)}=[+]$ is Hadamard we have $X_{A_{(1)}}=Y_{A_{(1)}}=\frac{1}{1+\sqrt{N}}$, and so: 
\begin{eqnarray*}
E_{(1)}&=&\frac{1}{1+\sqrt{N}}\{1\}_{N-1 \times 1}[1]\{1\}_{1 \times N-1}=\frac{1}{1+\sqrt{N}}\{1\}_{N-1}\\
S_{(1)}&=&\frac{1}{1+\sqrt{N}}\{1\}_{N-1 \times 1}\{1\}_{1 \times N-1}=\frac{1}{1+\sqrt{N}}\{1\}_{N-1}
\end{eqnarray*}

(2) Using the orthogonality of the first two lines in $H_{(2)}$, we find that the matrices $D_{00}$ and $D_{11}$ have size $N/2-1$. Since $A_{(2)}=[^+_+{\ }^+_-]$ is Hadamard we have $X_{A_{(2)}}=\frac{A}{2+\sqrt{2N}}$, $Y_{A_{(2)}}=\frac{I_2}{\sqrt{2}+\sqrt{N}}$, and so:
\begin{eqnarray*}
E_{(2)}&=&\frac{1}{2+\sqrt{2N}}\begin{Bmatrix}1&1\\1&-1\end{Bmatrix}_{(N/2-1,N/2-1) \times (1,1)}\begin{bmatrix}1&1\\1&-1\end{bmatrix}\begin{Bmatrix}1&1\\1&-1\end{Bmatrix}_{(1,1) \times (N/2-1,N/2-1)}\\
&=&\frac{2}{2+\sqrt{2N}}\begin{Bmatrix}1&1\\1&-1\end{Bmatrix}_{N/2-1,N/2-1}\\
S_{(2)}&=&\frac{1}{\sqrt{2}+\sqrt{N}}\begin{Bmatrix}1&1\\1&-1\end{Bmatrix}_{(N/2-1,N/2-1) \times (1,1)}\begin{Bmatrix}1&1\\1&-1\end{Bmatrix}_{(1,1) \times (N/2-1,N/2-1)}\\
&=&\frac{2}{\sqrt{2}+\sqrt{N}}\begin{Bmatrix}1&0\\0&1\end{Bmatrix}_{N/2-1,N/2-1}
\end{eqnarray*}
\end{proof}

As an illustration for the above computations, let us first work out the case $r=1,N=2$, with $H=[^+_+{\ }^+_-]$ being the first Walsh matrix. Here we have:
$$E=S=\frac{1}{1+\sqrt{2}}\implies U=-1,T=1$$

At $r=2,N=4$, consider the second Walsh matrix, written as in Theorem \ref{thm:r-12}:
$$W_4'=\begin{bmatrix}
+&+&+&+\\
+&-&+&-\\
+&+&-&-\\
+&+&-&+
\end{bmatrix}$$

We obtain the polar decomposition $D=UT$ of the corner $D=[^-_-{\ }^-_+]$:
$$E=\frac{1}{1+\sqrt{2}}\begin{bmatrix}1&1\\1&-1\end{bmatrix},
S=\frac{2}{2+\sqrt{2}}\begin{bmatrix}1&0\\0&1\end{bmatrix}\implies 
U=-\frac{1}{\sqrt{2}}\begin{bmatrix}1&1\\1&-1\end{bmatrix},
T=\sqrt{2}\begin{bmatrix}1&0\\0&1\end{bmatrix}$$

Let us record as well the following consequence of Theorem \ref{thm:r-12}:

\begin{corollary}
We have the formulae
\begin{eqnarray*}
\det(\lambda-T_{(1)})&=&(\lambda-1)(\lambda-\sqrt{N})^{4N-2}\\
\det(\lambda-T_{(2)})&=&(\lambda-\sqrt{2})^2(\lambda-\sqrt{N})^{N-4}
\end{eqnarray*}
so in particular $|\det D_{(1)}|=N^{N/2-1}$, $|\det D_{(2)}|=2N^{N/2-2}$.
\end{corollary}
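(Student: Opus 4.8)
The plan is to read off the spectra of $T_{(1)}$ and $T_{(2)}$ directly from the closed forms for $S_{(1)},S_{(2)}$ obtained in Theorem \ref{thm:r-12}, using that $T=\sqrt N I-S$ and that each $S$ is, up to a scalar, a block-diagonal matrix built from all-ones blocks. The only spectral input needed is the elementary fact that the all-ones matrix $\{1\}_m$ has eigenvalue $m$ with multiplicity $1$ and eigenvalue $0$ with multiplicity $m-1$. Diagonalizing $S$ this way gives the eigenvalues of $T$ by the substitution $\mu\mapsto\sqrt N-\mu$, and the characteristic polynomials then follow by collecting multiplicities.

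For case $(1)$, I would note that $S_{(1)}=\frac{1}{1+\sqrt N}\{1\}_{N-1}$ is an $(N-1)\times(N-1)$ matrix, so its eigenvalues are $\frac{N-1}{1+\sqrt N}$ once and $0$ with multiplicity $N-2$. The conjugate-surd simplification $\frac{N-1}{1+\sqrt N}=\sqrt N-1$ (from $N-1=(\sqrt N-1)(\sqrt N+1)$) then yields eigenvalues $1$ and $\sqrt N$ for $T_{(1)}=\sqrt N I-S_{(1)}$, with multiplicities $1$ and $N-2$; hence $\det(\lambda-T_{(1)})=(\lambda-1)(\lambda-\sqrt N)^{N-2}$. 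The degree here equals $\dim T_{(1)}=1+(N-2)=N-1$, as it must, which fixes the exponent. For case $(2)$, $S_{(2)}=\frac{2}{\sqrt2+\sqrt N}\bigl(\{1\}_{N/2-1}\oplus\{1\}_{N/2-1}\bigr)$ is block-diagonal of size $N-2$, so it has the eigenvalue $\frac{2}{\sqrt2+\sqrt N}\cdot(N/2-1)=\frac{N-2}{\sqrt2+\sqrt N}$ with multiplicity $2$ and $0$ with multiplicity $N-4$. Here $N-2=(\sqrt N-\sqrt2)(\sqrt N+\sqrt2)$ gives $\frac{N-2}{\sqrt2+\sqrt N}=\sqrt N-\sqrt2$, so $T_{(2)}$ has eigenvalue $\sqrt2$ with multiplicity $2$ and $\sqrt N$ with multiplicity $N-4$, whence $\det(\lambda-T_{(2)})=(\lambda-\sqrt2)^2(\lambda-\sqrt N)^{N-4}$.

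Finally, for the determinants, I would use that $D=UT$ with $U$ orthogonal, so $|\det D|=|\det U|\,\det T=\det T$, and compute $\det T$ as the product of the eigenvalues just found: $\det T_{(1)}=1\cdot(\sqrt N)^{N-2}=N^{N/2-1}$ and $\det T_{(2)}=(\sqrt2)^2(\sqrt N)^{N-4}=2N^{N/2-2}$. No genuine obstacle arises: the computation is purely a matter of bookkeeping the block sizes (so that the multiplicities $N-2$ and $N-4$ come out right) and carrying out the two simplifications $\frac{N-1}{1+\sqrt N}=\sqrt N-1$ and $\frac{N-2}{\sqrt2+\sqrt N}=\sqrt N-\sqrt2$, which are exactly what make the eigenvalues of $T$ collapse to the clean values $1,\sqrt2,\sqrt N$.
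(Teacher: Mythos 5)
Your proof is correct and is essentially the paper's own argument: the paper likewise reads the spectrum of $T=\sqrt{N}I-S$ from the block all-ones structure of $S_{(1)},S_{(2)}$ given in Theorem \ref{thm:r-12} (mentioning only in passing that the formulae also follow from Theorem \ref{thm:szollosi}). Your degree count moreover corrects a typo in the stated corollary: the exponent $4N-2$ should read $N-2$, consistent with $|\det D_{(1)}|=N^{N/2-1}$.
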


\begin{proof}
As already noted in \cite{szo}, these formulae from \cite{km1}, \cite{km2} follow from Theorem 2.3. They are even more clear from the above formulae of $S_{(1)}$  and $S_{(2)}$, using  the fact that the non-zero eigenvalues of $\{^1_0{\ }^0_1\}_{a,b}$ are 
$a$ and $b$.
\end{proof}

Modulo equivalence, the $\pm1$ matrices of size $r=3$ are as follows:
$$\begin{bmatrix}+&+&+\\+&-&+\\+&+&-\end{bmatrix}_{(3)}\qquad
\begin{bmatrix}+&+&+\\+&+&+\\+&+&-\end{bmatrix}_{(3')}\qquad
\begin{bmatrix}+&+&+\\+&+&+\\+&+&+\end{bmatrix}_{(3'')}$$

Among those, only $(3)$ is invertible, here is the result:

\begin{proposition}
For the $N\times N$ Hadamard matrices of type
$$\begin{bmatrix}
+&+&+&+&+&+&+\\
+&-&+&+&+&-&-\\
+&+&-&+&-&+&-\\
+&+&+&D_{00}&D_{01}&D_{02}&D_{03}\\
+&+&-&D_{10}&D_{11}&D_{12}&D_{13}\\
+&-&+&D_{20}&D_{21}&D_{22}&D_{23}\\
+&-&-&D_{30}&D_{31}&D_{32}&D_{33}
\end{bmatrix}_{(3)}$$
the polar decomposition $D=UT$ with $U=\frac{1}{\sqrt{N}}(D-E)$, $T=\sqrt{N}I-S$ is given by:
$$E_{(3)}=\frac{1}{\sqrt N +1} \begin{Bmatrix}
x & y & y & 1\\
y & -y & x & -1\\
y & x & -y & -1\\
1 & -1 & -1 & -3
\end{Bmatrix}_{N/4-1,N/4-1,N/4-1,N/4}$$
$$S_{(3)}=\frac{1}{\sqrt N +1} \begin{Bmatrix}
z & t & t & -1\\
t & z & -t & 1\\
t & -t & z & 1\\
-1 & 1 & 1 & 3\end{Bmatrix}_{N/4-1,N/4-1,N/4-1,N/4}$$
where 
$$x=\frac{7 \sqrt N + 6}{3\sqrt N+6}, \quad y=\frac{5 \sqrt N + 6}{3\sqrt N+6}, \quad z=\frac{9 \sqrt N + 10}{3\sqrt N+6} , \quad t=\frac{3 \sqrt N + 2}{3\sqrt N+6}$$
In particular, if $N > \|A\|^2=4$, $D$ is an AHP.
\end{proposition}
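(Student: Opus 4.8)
The plan is to apply Lemma~\ref{lem:polar-formula} directly to $A=A_{(3)}=\begin{bmatrix}+&+&+\\+&-&+\\+&+&-\end{bmatrix}$. This matrix is symmetric and invertible, with characteristic polynomial $(\lambda+1)(\lambda-2)(\lambda+2)$, so its eigenvalues are $-1,2,-2$ and its singular values are $1,2,2$; in particular $\|A\|=2$, and the hypothesis $\|A\|<\sqrt N$ of Lemma~\ref{lem:polar-formula} reads exactly $N>4$. First I would pin down the block structure of $B$ and $C$. Grouping the last $N-3$ columns of $H$ by their restriction $(H_{2j},H_{3j})$ to the second and third rows splits them into four classes indexed by the sign patterns $(+,+),(+,-),(-,+),(-,-)$, on each of which the corresponding block-column of $B$ is a constant vector $v_j=(1,\varepsilon_2,\varepsilon_3)^t$. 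Orthogonality of the first three rows forces each joint pattern to occur $N/4$ times in total (hence $4\mid N$), and since the three columns of $A$ realize the patterns $(+,+),(-,+),(+,-)$ while no column of $A$ realizes $(-,-)$, three classes have size $N/4-1$ and the fourth has size $N/4$. The same analysis on rows gives the block structure of $C$, with block-row $i$ equal to the constant vector $(1,\varepsilon_2,\varepsilon_3)$.

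Next I would compute $X_A$ and $Y_A$ from Definition~\ref{def:XY-A}. Since $A$ is symmetric we have $\sqrt{A^tA}=\sqrt{AA^t}=|A|$ and $Pol(A)^t=Pol(A)$, so both $X_A$ and $Y_A$ are \emph{functions of $A$}; as $A$ has three distinct eigenvalues, each is a polynomial of degree at most $2$ in $A$, determined by Lagrange interpolation of $\lambda\mapsto sgn(\lambda)/(\sqrt N+|\lambda|)$ and $\lambda\mapsto 1/(\sqrt N+|\lambda|)$ at $\lambda=-1,2,-2$. Writing $X_A=a_0I+a_1A+a_2A^2$ and $Y_A=\beta_0I+\beta_1A^2$, the products $E=CX_AB$ and $S=B^tY_AB$ collapse, block by block, to the $4\times4$ scalar matrices with entries $v_i^tX_Av_j$ and $v_i^tY_Av_j$, where $v_0,\dots,v_3$ are the four sign vectors above. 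These in turn reduce to the three bilinear forms $v_i^tv_j$, $v_i^tAv_j$, $v_i^tA^2v_j$, which are explicit small integers; substituting the interpolated coefficients and simplifying over the common denominator $(\sqrt N+1)(\sqrt N+2)$ yields the stated $E_{(3)}$ and $S_{(3)}$ together with the closed forms of $x,y,z,t$.

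For the final claim, the key observation is that the positivity condition is automatic: by the polar decomposition $D=UT$ we have $Pol(D)^tD=U^tUT=T=\sqrt{D^tD}\ge 0$. Hence, setting $U=Pol(D)=\frac1{\sqrt N}(D-E)$, as soon as $sgn(U)=D$ the product $U^t\,sgn(U)=U^tD=T$ is positive semidefinite, which is precisely the criterion in the characterization of rescaled AHM of Section~\ref{sec:AHM}. It therefore suffices to guarantee $sgn(U_{ij})=D_{ij}$, i.e. $\|E\|_\infty<1$, since $|D_{ij}|=1$. Inspecting $E_{(3)}$, every entry other than the bottom-right block satisfies the bound for all $N>4$, while that distinguished entry equals $3/(\sqrt N+1)$; thus $\|E\|_\infty<1$ reduces to $3/(\sqrt N+1)<1$, i.e. $N>4=\|A\|^2$, as claimed.

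The main obstacle is computational rather than conceptual: carrying the interpolation coefficients through the $4\times4$ reduction and simplifying the resulting rational functions of $\sqrt N$ into the exact forms $x,y,z,t$ is lengthy and error-prone. A secondary delicate point is the combinatorial bookkeeping fixing the block sizes $N/4-1,N/4-1,N/4-1,N/4$ and the ordering of the four sign classes, since a wrong assignment would permute the entries of $E_{(3)}$ and $S_{(3)}$. I would guard against both by verifying one entry directly: $v_3=(1,-1,-1)^t$ is the $(-1)$-eigenvector of $A$, so $v_3^tX_Av_3=3\,g_X(-1)=-3/(\sqrt N+1)$ and $v_3^tY_Av_3=3/(\sqrt N+1)$, which immediately reproduces the distinguished corner entries of $E_{(3)}$ and $S_{(3)}$ and confirms the threshold $N>4$.
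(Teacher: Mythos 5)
Your proposal is correct and follows essentially the same route as the paper's proof: apply Lemma \ref{lem:polar-formula} to $A_{(3)}$ (noting $\|A\|=2$, so the hypothesis is $N>4$), determine the block sizes $N/4-1,N/4-1,N/4-1,N/4$ from orthogonality of the first three rows/columns, compute $E=CX_AB$ and $S=B^tY_AB$ blockwise as $v_i^tX_Av_j$ and $v_i^tY_Av_j$, and conclude AHP from $\|E_{(3)}\|_\infty=3/(\sqrt N+1)<1\iff N>4$. Your only departure is organizational---obtaining $X_A,Y_A$ by functional calculus/Lagrange interpolation at the eigenvalues $-1,2,-2$ of the symmetric matrix $A$ instead of the paper's stated direct computation of $X_A$---and your eigenvector spot-check of the corner entries is consistent with the stated formulas.
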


\begin{proof}
By direct computation, we have
$$X_A = \frac{1}{3(\sqrt N +1)(\sqrt N +2)}\begin{bmatrix}
\sqrt N & 2\sqrt N + 3 & 2\sqrt N + 3 \\
2\sqrt N + 3 & -(2\sqrt N + 3) & \sqrt N \\
2\sqrt N + 3 & \sqrt N & -(2\sqrt N + 3)
\end{bmatrix}$$
From the orthogonality condition for the first three lines of $H_{(3)}$, we find that the dimensions of the matrices $D_{00},D_{11},D_{22}$ and $D_{33}$ are, respectively, $N/4-1,N/4-1,N/4-1$ and $N/4$. Thus, we have
$$B_{(3)} = \begin{Bmatrix}
1 & 1 & 1 & 1 \\
1 & 1 & -1 & -1 \\
1 & -1 & 1 & -1
\end{Bmatrix}_{(1,1,1) \times (N/4-1,N/4-1,N/4-1,N/4)}$$
and $C_{(3)} = B_{(3)}^t$. The formulas for $E_{(3)}$ and $S_{(3)}$ follow by direct computation.

Since 
$$3>\frac{7 \sqrt N + 6}{3\sqrt N+6}>\frac{5 \sqrt N + 6}{3\sqrt N+6}>1,$$
we have $\|E_{(3)}\|_\infty = 3/(\sqrt N+1)$ and the conclusion about $D$ being AHP follows.
\end{proof}

Note that in the case $N=4$, there is a unique way to complement the matrix $A_{(3)}$ above into a $4 \times 4$ Hadamard matrix. Since the complement in this case is simply $D=[1]$, we conclude that, for all $N \geq 4$, the complement of $A_{(3)}$ inside a $N \times N$ Hadamard matrix is AHP.

In this case, using Theorem \ref{thm:szollosi}, we get $\det(\lambda-T)=(\lambda+1)(\lambda-2)^2(\lambda-1/\sqrt{N})^{N-3}$, and so $|\det D|=2N^{(N-3)/2}$. 

\section{Examples of non-AHP sign patterns}\label{sec:non-AHP}

In the previous section, we have shown that for $r=1,2,3$, all invertible $r \times r$ sign patterns are complemented by AHP matrices inside Hadamard matrices. In the following proposition we show that this is not the case for larger values of $r$. Recall that for a matrix $D$ to be AHP, it must be invertible, its polar part $U=Pol(D)$ must have non-zero entries, and $D = sgn(U)$ must hold.

\begin{proposition}
Consider the Walsh matrix $W_8$, and the Paley matrix $H_{12}$.
\begin{enumerate}
\item $W_8$ has a $4\times 4$ submatrix which is not AHP, due to a $U_{ij}=0$ reason.

\item $H_{12}$ has a $7\times7$ submatrix which is not AHP, due to a $D_{ij}=-1,U_{ij}>0$ reason.
\end{enumerate}
\end{proposition}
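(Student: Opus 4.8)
The two parts are existence statements, established by exhibiting explicit submatrices together with a finite polar-decomposition computation; the point of interest is that they realize the two genuinely different ways in which the AHP condition can fail. Recall that for $D$ invertible the only candidate almost Hadamard matrix is $\sqrt N\,Pol(D)$, so $D$ is AHP precisely when $U=Pol(D)$ has no zero entry and $sgn(U)=D$. Part (1) breaks the first requirement, while part (2) breaks the second one while keeping the first.

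For part (1) I would work at $r=d=4$, $N=8$, where the smallness hypothesis of Lemma \ref{lem:bound-E} is violated ($r^2=16>N=8$), so the bound $\|E\|_\infty<1$ is no longer forced and the value $\|E\|_\infty=1$ can be attained. Choosing a $4\times4$ submatrix whose complement $A\in M_4(\pm1)$ is invertible with $\|A\|<\sqrt8$, Lemma \ref{lem:polar-formula} gives $U=\tfrac1{\sqrt8}(D-E)$ with $E=CX_AB$, so a vanishing polar entry is exactly an index with $E_{ij}=D_{ij}$, i.e. $|E_{ij}|=1$ with the matching sign. The plan is therefore to run through the finitely many (up to symmetry) $4\times4$ submatrices of $W_8$ with invertible complement, for instance the one on rows and columns $\{1,2,3,6\}$, compute $E$, and read off an entry with $E_{ij}=D_{ij}$; then $U_{ij}=0$ and $D$ is not AHP.

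For part (2) I would take $r=5$, $d=7$ inside the Paley matrix $H_{12}$. Here the target $D\in M_7(\pm1)$ is invertible and its polar part $U=Pol(D)$ has all entries nonzero, so the mechanism of part (1) does not apply; instead one locates a single index with $D_{ij}=-1$ but $U_{ij}>0$, so that $sgn(U)\neq D$ and again $D$ fails to be AHP. Concretely I would fix a $5\times5$ complement $A$ that is invertible with $\|A\|<\sqrt{12}$, compute $U=\tfrac1{\sqrt{12}}(D-E)$ from Lemma \ref{lem:polar-formula} (or directly from the singular value decomposition of $D$), and exhibit the mismatched entry.

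The content here is not a single clean inequality but a finite verification, and the main obstacle is twofold. Conceptually, one must isolate the two failure modes and select submatrices realizing each, which is a small guided search, the guiding principle being that both examples must lie outside the ranges covered by Theorem \ref{thm:AHP}. Computationally, the $7\times7$ polar decomposition in part (2) has no closed form, so the verification requires exact algebraic arithmetic, or numerics backed by a rigorous sign and gap bound, in order to be sure that the strict relations $U_{ij}\neq0$ and $U_{ij}>0>D_{ij}$ hold genuinely rather than as rounding artifacts.
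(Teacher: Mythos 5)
Your reduction of the problem is correct, and your overall strategy (exhibit an explicit submatrix, then do a finite polar-decomposition check) is exactly the paper's: for invertible $D$ one has $U^tD=\sqrt{D^tD}\geq0$ automatically, so $D$ is AHP precisely when $U=Pol(D)$ has no zero entry and $sgn(U)=D$, and via Lemma \ref{lem:polar-formula} a zero entry $U_{ij}=0$ amounts to $E_{ij}=D_{ij}$. The gap is that you never carry out the verification, and for an existence statement of this kind the verified example \emph{is} the proof: part (2) names no submatrix at all, and the single candidate you name in part (1) is in fact wrong. Take the submatrix of $W_8$ on rows and columns $\{1,2,3,6\}$; its complement, on rows and columns $\{4,5,7,8\}$, is
$$D=\begin{bmatrix}+&+&-&+\\+&-&-&-\\-&-&+&+\\+&-&+&-\end{bmatrix}$$
A direct computation gives $D^tD=4I_4+2M$ with $M^2=2I_4$, whence $\sqrt{D^tD}=aI_4+a^{-1}M$ with $a=\sqrt{2+\sqrt2}$, and
$$U=Pol(D)=\alpha D+\beta\, DM,\qquad \alpha=\frac{a^3}{4+4\sqrt2},\qquad \beta=-\frac{a}{4+4\sqrt2}$$
Since the entries of $DM$ lie in $\{0,\pm2\}$ and $\alpha=(2+\sqrt2)\,|\beta|>2|\beta|$, every entry of $U$ is nonzero and carries the sign of the corresponding entry of $D$. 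Hence $sgn(U)=D$ and $U^tD=\sqrt{D^tD}>0$, so this complement is AHP: your candidate realizes neither failure mode. (The same computation, transposed, applies if you meant the $\{1,2,3,6\}$ block to play the role of $D$ rather than $A$.)

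The paper's examples are different, and the difference matters. For (1) it takes rows and columns $\{1,2,3,5\}$ of $W_8$, where the complement's polar part has fourth row $(\tfrac{1}{\sqrt3},\tfrac{1}{\sqrt3},\tfrac{1}{\sqrt3},0)$, an exact zero; for (2) it takes rows and columns $\{1,2,3,5,6\}$ of $H_{12}$, where $D_{45}=-1$ but $U_{45}\approx0.03>0$. Your guiding principle, ``choose something outside the range covered by Theorem \ref{thm:AHP}'', is necessary but far from sufficient: your own candidate lies outside that range and still has an AHP complement, and the paper's remark in Section \ref{sec:small-r} on the matrix $A_{(3)}$ (whose complement is AHP for every $N\geq4$, including $N=4$ where all the sufficient bounds fail) makes the same point. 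So the search cannot be left implicit; at least one candidate must be computed through to the end, which is precisely what the paper's proof consists of. Your closing concern about numerical rigor in part (2) is legitimate but secondary: the paper is content to display a numerical $U$, the margin $U_{45}\approx 0.03$ being far above rounding error for a $7\times7$ polar decomposition, and an exact algebraic check could be supplied if desired.
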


\begin{proof}
(1) Let $A$ be the submatrix of $W_8$ (see section \ref{sec:submatrices}) having the rows and columns with indices $1,2,3,5$. Then $A,D$ and $U=Pol(D)$ are as follows:
$$A=\begin{bmatrix}
+&+&+&+\\
+&-&+&+\\
+&+&-&+\\
+&+&+&-
\end{bmatrix},\quad
D=\begin{bmatrix}
+&-&-&+\\
-&+&-&+\\
-&-&+&+\\
+&+&+&-
\end{bmatrix},\quad
U=\begin{bmatrix}
\frac{2}{3}&-\frac{1}{3}&-\frac{1}{3}&\frac{1}{\sqrt 3}\\
-\frac{1}{3}&\frac{2}{3}&-\frac{1}{3}&\frac{1}{\sqrt 3}\\
-\frac{1}{3}&-\frac{1}{3}&\frac{2}{3}&\frac{1}{\sqrt 3}\\
\frac{1}{\sqrt 3}&\frac{1}{\sqrt 3}&\frac{1}{\sqrt 3}&0 
\end{bmatrix}$$

(2) Consider indeed the unique $12\times 12$ Hadamard matrix, written as:
$$H_{12}=\begin{bmatrix}
+&-&-&-&-&-&-&-&-&-&-&-\\
+&+&-&+&-&-&-&+&+&+&-&+\\
+&+&+&-&+&-&-&-&+&+&+&-\\
+&-&+&+&-&+&-&-&-&+&+&+\\
+&+&-&+&+&-&+&-&-&-&+&+\\
+&+&+&-&+&+&-&+&-&-&-&+\\
+&+&+&+&-&+&+&-&+&-&-&-\\
+&-&+&+&+&-&+&+&-&+&-&-\\
+&-&-&+&+&+&-&+&+&-&+&-\\
+&-&-&-&+&+&+&-&+&+&-&+\\
+&+&-&-&-&+&+&+&-&+&+&-\\
+&-&+&-&-&-&+&+&+&-&+&+
\end{bmatrix}$$
        
Let $A$ be the submatrix having the rows and columns with indices $1,2,3,5,6$:
$$A=\begin{bmatrix}
+&-&-&-&-\\
+&+&-&-&-\\
+&+&+&+&-\\
+&+&-&+&-\\
+&+&+&+&+
\end{bmatrix},\quad
D=\begin{bmatrix}
+&-&-&-&+&+&+\\
+&+&-&+&-&-&-\\
+&+&+&-&+&-&-\\
+&-&+&+&-&+&-\\
-&+&-&+&+&-&+\\
-&+&+&-&+&+&-\\
-&+&+&+&-&+&+
\end{bmatrix}$$

Then $D$ is invertible, and its polar part is given by:
$$U\approx\begin{bmatrix}
0.51&-0.07&-0.37&-0.22&0.35&0.51&0.37\\
0.37&0.51&-0.51&0.22&-0.35&-0.07&-0.37\\
0.51&0.37&0.51&-0.22&0.35&-0.37&-0.07\\
0.35&-0.35&0.35&0.61&0.03&0.35&-0.35\\
-0.22&0.22&-0.22&0.61&0.61&-0.22&0.22\\
-0.37&0.37&0.07&-0.22&0.35&0.51&-0.51\\
-0.07&0.51&0.37&0.22&-0.35&0.37&0.51 
\end{bmatrix}$$

Now since $D_{45}=-1$ and $U_{45}\approx 0.03>0$, this gives the result.
\end{proof}

\end{document}